\renewcommand{\d}{\text{d}}
\newcommand{\qq}{\mathbf q}
\newcommand{\x}{\mathbf x}
\newcommand{\Ghat}{\smash{\widehat{G}_E}}
\newcommand{\Gtilde}{\smash{\widetilde{G}_E}}
\newcommand{\C}{\mathbb C}
\newcommand{\N}{\mathbb N}
\newcommand{\R}{\mathbb R}
\newcommand{\mcA}{\mathcal A}
\newcommand{\mcI}{\mathcal I}
\newcommand{\mcL}{\mathcal L}
\newcommand{\mcP}{\mathcal P}
\newcommand{\mcQ}{\mathcal Q}
\newcommand{\mcS}{\mathcal S}
\newcommand{\abs}[1]{\left|#1\right|}
\newcommand{\abso}[1]{|#1|}
\newcommand{\norm}[2]{\left\|#1\right\|_{#2}}
\newcommand{\normV}[1]{\norm{#1}{V}}
\newcommand{\dual}[3]{\langle #1,#2\rangle_{#3}}
\newcommand{\dualV}[2]{\dual{#1}{#2}{V}}
\newcommand{\coeff}[2]{\smash{{\left(#1\right)_{#2}}}}
\newcommand{\PspaceN}[2]{\mathbb P_{#1}^{#2}\left(\C\right)}
\newcommand{\PspaceV}[2]{\mathbb P_{#1}\left(\C;#2\right)}
\newcommand{\ball}[2]{\mathcal B(#1,#2)}
\newcommand{\MNl}{{[M/N]}}
\newcommand{\padegen}[3]{#1_{[#2/#3]}}
\newcommand{\padef}[1]{\padegen{#1}{M}{N}}
\newcommand{\pade}{\padef{\mcS}}
\newcommand{\padek}{\padegen{\mcS}{M_k}{N_k}}
\newcommand{\padebar}{\padef{\overline{\mcS}}}
\newcommand{\num}{\mathcal P_{[M/N]}}
\newcommand{\numbar}{\overline{\mathcal P}_{[M/N]}}
\newcommand{\den}{Q_{[M/N]}}
\newcommand{\denk}{Q_{[M_k/N_k]}}
\newcommand{\denbar}{\overline{Q}_{[M/N]}}
\newcommand{\HMN}{H_\MNl}
\newcommand{\HMNk}{H_{[M_k/N_k]}}
\newcommand{\jfun}[2]{j_{E,\rho}(#1,#2)}
\newcommand{\jfuntilde}[1]{\smash{\widetilde{j}_{E}}(#1)}
\newcommand{\distS}{d_{\Sigma(\mcL)}}
\newcommand{\review}[1]{#1}
\newtheorem{theorem}{Theorem}[section]
\newtheorem{lemma}[theorem]{Lemma}
\newtheorem{corollary}[theorem]{Corollary}
\newtheorem{definition}[theorem]{Definition}
\newtheorem{remark}[theorem]{Remark}
\newenvironment{proof}{\noindent{\bf Proof. \/}\begin{small}\noindent}{\hfill\EndProofMarker\end{small}}
\newcommand{\EndProofMarker}{$\Box$}
\numberwithin{equation}{section}
\newcommand{\UniVie}{\makebox[0pt][l]{\textsuperscript{\hyperlink{UniVie}{$\sharp$}}}}
\newcommand{\CSQI}{\makebox[0pt][l]{\textsuperscript{\hyperlink{CSQI}{$\S$}}}}
\newcommand\blfootnote[1]{%
  \begingroup
  \renewcommand\thefootnote{}\footnote{#1}%
  \addtocounter{footnote}{-1}%
  \endgroup
}
\newcommand{\hmail}[1]{\href{mailto:#1}{\tt #1}}
\begin{document}

\title{Fast Least-Squares Pad\'e approximation of problems with normal operators and meromorphic structure}

\author{Francesca Bonizzoni\UniVie,\hspace{-.02cm}\footnote{F. Bonizzoni acknowledges partial support from the Austrian Science Fund (FWF) through the project F 65, and has been supported by the FWF Firnberg-Program, grant T998.}\hspace{.02cm} Fabio Nobile\CSQI, Ilaria Perugia\UniVie,\hspace{-.02cm}\footnote{I. Perugia has been funded by the Austrian Science Fund (FWF) through the projects F 65 and P 29197-N32, and by the Vienna Science and Technology Fund (WWTF) through the project MA14-006.}\hspace{.02cm} Davide Pradovera\CSQI\phantom{,}\hspace{.02cm}\footnote{D. Pradovera has been funded by the Swiss National Science Foundation (SNF) through project 182236.}}

\date{}

{
\maketitle 
\blfootnote{The Authors would like to acknowledge the kind hospitality of the Erwin Schr\"odinger International Institute for Mathematics and Physics (ESI), where part of this research was developed under the frame of the Thematic Programme {\it Numerical Analysis of Complex PDE Models in the Sciences}.}}
\vspace{-.75cm}
\begin{center}
\hypertarget{UniVie}{\textsuperscript{$\sharp$}{\small\ Faculty of Mathematics, Universit\"at Wien\\ 
Oskar-Morgenstern-Platz 1, 1090 Wien, Austria\\
\hmail{francesca.bonizzoni@univie.ac.at}, \hmail{ilaria.perugia@univie.ac.at}\\[.15cm]
}}
\hypertarget{CSQI}{\textsuperscript{$\S$}{\small\ CSQI -- MATH, \'Ecole Polytechnique F\'ed\'erale de Lausanne\\
Station 8, CH-1015 Lausanne, Switzerland\\
\hmail{fabio.nobile@epfl.ch}, \hmail{davide.pradovera@epfl.ch}\\
}
}
\end{center}

\noindent
{\bf Keywords}: Hilbert space-valued meromorphic maps, Pad\'e approximation, rational model order reduction, eigenvalue estimation, Krylov projection methods, non-coercive parametric PDEs, Helmholtz equation, frequency response.
\vspace*{0.5cm}

\noindent
{\bf AMS Subject Classification}: 41A21, 65D15, 35P15, 41A25, 35J05.

\begin{abstract}
In this work, we consider the approximation of Hilbert space-valued meromorphic functions that arise as solution maps of parametric PDEs whose operator is the shift of an operator with normal and compact resolvent, e.g. the Helmholtz equation. In this restrictive setting, we propose a simplified version of the Least-Squares Pad\'e approximation technique \review{studied in \cite{Bonizzoni2016} following \cite{Guillaume1998}}. In particular, the estimation of the poles of the target function reduces to a low-dimensional eigenproblem for a Gramian matrix, allowing for a robust and efficient numerical implementation (hence the ``fast'' in the name). Moreover, we prove several theoretical results that improve and extend those in \cite{Bonizzoni2016}, including the exponential decay of the error in the approximation of the poles, and the convergence in measure of the approximant to the target function. The latter result extends the classical one for scalar Pad\'e approximation to our functional framework. We provide numerical results that confirm the improved accuracy of the proposed method with respect to the one introduced in \cite{Bonizzoni2016} for differential operators with normal and compact resolvent.
\end{abstract}

%%%%%%%%%%%%%%%%%%%%%%%%%%%%%%%%%%%%%%%%%%%%%%%%%%%%%%%%%%%%%%%%%%%%%%%%%%%%%%%%%%
\section{Introduction}
\label{sec:Intruduction}
%%%%%%%%%%%%%%%%%%%%%%%%%%%%%%%%%%%%%%%%%%%%%%%%%%%%%%%%%%%%%%%%%%%%%%%%%%%%%%%%%%

Parametric PDEs arise in a wide variety of contexts in physics, applied mathematics, and engineering. In most cases, the interest is in the evaluation or approximation of the solution map
\begin{equation}
\label{eq:sol_map_general}
\mcS({\bm\mu}):{\bm\mu}\mapsto\mcA({\bm\mu})^{-1}{\bm f}({\bm\mu})\text{,}
\end{equation}
which associates a (possibly multi-dimensional) parameter to the corresponding solution of a PDE based on the differential operator $\mcA(\cdot)$ and on the data $\bm f(\cdot)$. The parameter ${\bm\mu}$ usually represents a collection of physical or geometric properties, which characterize the underlying complex system, and are allowed to vary within some range of interest.

In many applications, computing the solution of the underlying PDE by some discretization scheme may be very costly even at a single point ${\bm\mu}$ in the parameter domain. Thus, the direct evaluation of the solution map over a large number of parameter values is unfeasible. Within this framework, model order reduction is often applied to obtain a surrogate solution map, with good approximation properties in the whole parameter range of interest. Depending on the existence and on the stability properties of the resolvent operator $\mcA(\cdot)^{-1}$, difficulties may arise in devising a reasonably accurate reduced model, and special techniques may be required, due to the resolvent $\mcA(\cdot)^{-1}$ not existing or being ``nearly unbounded'' at some points in the range of interest, see e.g. \cite{Chen2010, Lassila2012, Veroy2003}.

One particular and common instance of such problems is related to the lack of coercivity of the parametric PDE over a subset of the parameter range of interest. In this paper, we specifically address this situation by considering parametric PDEs for which the operator has a eigenproblem-like structure, i.e. is of the form
\begin{equation}
\label{eq:operator_linear_identity}
\mcA({\bm\mu})=\mcL-z({\bm\mu})\mcI\text{,}
\end{equation}
with $\mcL$ an operator with sufficient regularity (the exact requirements amount to invertibility, and normality and compactness of the resolvent), $\mcI$ the identity operator, and $z({\bm\mu})$ a complex-valued smooth function. Indeed, such operator lacks a bounded resolvent whenever $z({\bm\mu})$ falls into the spectrum of $\mcL$, and is ``nearly unbounded'' for nearby values of the parameter. The problems which may fall within this framework include the Helmholtz, Maxwell, and Schr\"odinger equations with suitable boundary conditions and constraints, to cite just a few.

In this context, rational approximations of the solution map $\mcS({\bm\mu})$ are particularly appealing, as they can potentially capture those critical values of the parameter ${\bm\mu}$ for which the resolvent is not defined \review{\cite{Bonizzoni2016, Guillaume1998, Hetmaniuk2012, Hetmaniuk2013}}. In this paper we focus on the work \cite{Bonizzoni2016}\review{. There, following the Least-Squares Pad\'e approach introduced in \cite{Guillaume1998} for multivariate complex-valued functions,} a general approach is proposed (in particular, without the hypothesis of normality) to build Pad\'e-type rational approximations of Hilbert space-valued monovariate maps. In particular, the construction of the approximant relies on evaluating the target function and its derivatives at a single point in the parameter domain. Such approximation strategies are summarized in Section~\ref{sec:Pade approximation of the solution map}, where their main convergence results are also stated.

In this work, we focus on problems with the particular structure \eqref{eq:sol_map_general}--\eqref{eq:operator_linear_identity}, with $\mcL^{-1}$ normal and compact, and propose a simplified version of the Least-Squares Pad\'e formulation proposed in \cite{Bonizzoni2016}, which can be constructed by a fast and robust algorithm based on progressive orthogonalization techniques. Moreover, our new ``fast'' method leads to approximations that are more accurate than those produced by the Least-Squares Pad\'e method in \cite{Bonizzoni2016}, by better exploiting the eigenproblem-like structure of the solution map.

The particular normal structure and simplified Pad\'e construction allow us to obtain theoretical convergence results (Theorem~\ref{th:fast_convergence} and Corollary~\ref{th:fast_convergence_speed}) that extend those in \cite{Bonizzoni2016}, by relaxing the hypotheses on the approximant parameters and by showing better convergence rates, as attested also by numerical experiments. In addition, within the framework of this paper, we are able to prove exponential convergence rates (Theorem~\ref{th:pole_convergence}) in the approximation of the critical values of the parameters, for which the PDE is ill-posed.

The outline of this work is as follows. In Section~\ref{sec:Preliminary results}, we describe the precise assumptions on $\mcL$ in \eqref{eq:operator_linear_identity}, and investigate their consequences on the solution map $\mcS$. In Section~\ref{sec:Pade approximation of the solution map}, we briefly summarize the rational approximation technique introduced in \cite{Bonizzoni2016}, along with the corresponding convergence result. In Section~\ref{sec:modified algorithm}, we introduce our new ``fast'' Least-Squares Pad\'e approach. %
In Sections~\ref{sec:conv_denominator_fast} and \ref{sec:conv_pade_fast}, we derive several convergence results in approximating the spectrum of $\mcL$ and the solution map, respectively.
In Section~\ref{sec:algo_details}, some techniques to enhance the numerical stability of the method are described. A numerical experiment comparing the approach of \cite{Bonizzoni2016} with the new one is reported in Section~\ref{sec:compare_algorithms}. Lastly, Section~\ref{sec:conclusions} contains some concluding remarks.

%%%%%%%%%%%%%%%%%%%%%%%%%%%%%%%%%%%%%%%%%%%%%%%%%%%%%%%%%%%%%%%%%%%%%%%%%%%%%%%%%%
\section{Problem setting}
\label{sec:Preliminary results}
%%%%%%%%%%%%%%%%%%%%%%%%%%%%%%%%%%%%%%%%%%%%%%%%%%%%%%%%%%%%%%%%%%%%%%%%%%%%%%%%%%

Let $\left(V, \dual{\cdot}{\cdot}{V}\right)$ be a separable Hilbert space over $\C$, with induced norm $\normV{\cdot}$. We consider a bijective linear operator $\mcL:D(\mcL)\subset V\to V$ whose domain $D(\mcL)$ is dense in $V$ and whose resolvent $\mcL^{-1}:V\to V$ is compact and normal, i.e. $$\mcL^{-1}\left(\mcL^{-1}\right)^*=\left(\mcL^{-1}\right)^*\mcL^{-1}\quad\text{over }V\text{,}$$ with $\smash{\left(\mcL^{-1}\right)^*}$ denoting the adjoint of $\mcL^{-1}$, namely $\smash{\left(\mcL^{-1}\right)^*}:V\to V$ linear and bounded, such that $$\dualV{\mcL^{-1}v}{w}=\dualV{v}{\left(\mcL^{-1}\right)^*w}\quad\text{for all }v,w\in V\text{.}$$

The spectral theorem for normal compact operators \cite{Blanchard2003, Kubrusly2012} can be applied to $\mcL^{-1}$, leading to the following properties:
\begin{itemize}
\item the spectrum of $\mcL$, which, since $\mcL$ is closed \cite[Proposition 1.15]{Conway1990}, can be characterized as
\begin{equation}
\label{eq:spectrum_set}
\Sigma(\mcL)=\left\{\lambda\in\C : \exists v\in D(\mcL)\setminus\{0\}, \mcL v=\lambda v\right\}\text{,}
\end{equation}
is discrete and does not include 0;
\item whenever $\Sigma(\mcL)$ is not finite (i.e. when $V$ is infinite-dimensional), its only limit point is $\infty$;
\item for all $\lambda\in\Sigma(\mcL)$, the eigenspace associated to $\lambda$, namely
\begin{equation}
\label{eq:eigenspaces}
V_\lambda=\left\{v\in D(\mcL) : \mcL v=\lambda v\right\}\text{,}
\end{equation}
has finite dimension;
\item the eigenspaces $V_\lambda$ and $V_\nu$ are $V$-orthogonal whenever $\lambda,\nu\in\Sigma(\mcL)$, $\lambda\neq\nu$;
\item the family of orthogonal projections onto the eigenspaces, which we denote by $\{P_\lambda\}_{\lambda\in\Sigma(\mcL)}$ (with the same indexing as the eigenspaces), is a resolution of the identity on $V$, \review{i.e., for any $v\in V$,}
\begin{equation}
\label{eq:spectral_basis}
v=\sum_{\lambda\in\Sigma(\mcL)}P_\lambda v\quad\text{with convergence in }V\text{.}
\end{equation}
\end{itemize}

Given an arbitrary $v^\star\in V$ and the (scalar) parameter $z\in\C\setminus\Sigma(\mcL)$, we consider the problem
\begin{equation}
\label{eq:problem_parametric}
\text{find }\mcS(z)\in D(\mcL)\;:\;\left(\mcL-z\mcI\right)\mcS(z)=v^\star
\end{equation}
(with $\mcI:V\to V$ being the identity operator), which defines uniquely the map
\begin{equation}
\label{eq:solution_map}
\mcS:\C\setminus\Sigma(\mcL)\rightarrow V\text{,}\quad z\mapsto\mcS(z)\text{.}
\end{equation}

In particular, for all $\lambda\in\Sigma(\mcL)$, due to \eqref{eq:spectral_basis} and to the fact that the spectral projector $P_\lambda$ commutes\footnote{For all $\lambda\in\Sigma(\mcL)$, the orthogonal projector $P_\lambda$ commutes with the resolvent $\mcL^{-1}$ \cite{Kubrusly2012}, i.e.
\begin{equation}\label{eq:resolvent_commutes}\tag{$*$}
P_\lambda\mcL^{-1}v=\mcL^{-1}P_\lambda v\quad\text{for all }v\in V\text{.}
\end{equation}
For all $w\in D(\mcL)$ it suffices to set $v=\mcL w$ and to apply $\mcL$ to both sides of \eqref{eq:resolvent_commutes} to obtain the desired result.} with $\mcL$ over $D(\mcL)$, \review{we have that}
\begin{equation}
\label{eq:meromorphic_projections}
P_\lambda v^\star=P_\lambda\left(\mcL-z\mcI\right)\mcS(z)=\left(\mcL-z\mcI\right)P_\lambda\mcS(z)=\left(\lambda-z\right)P_\lambda\mcS(z)\text{.}
\end{equation}
Accordingly, the map \eqref{eq:solution_map} can be expressed as
\begin{equation}
\label{eq:meromorphic_decomposition}
\mcS(z)=\sum_{\lambda\in\Sigma(\mcL)}P_\lambda\mcS(z)=\sum_{\lambda\in\Sigma(\mcL)}\frac{P_\lambda v^\star}{\lambda-z}\quad\text{with convergence in }V\text{,}
\end{equation}
and its $V$-norm at $z\in\C\setminus\Sigma(\mcL)$ is bounded by $\normV{v^\star}/\min_{\lambda\in\Sigma(\mcL)}\abs{\lambda-z}$.

From the orthogonal decomposition \eqref{eq:meromorphic_decomposition}, we can deduce that $\mcS$ is meromorphic over $\C$, and that all its poles are simple and belong to the spectrum of $\mcL$. In particular, it is possible to compute the Taylor coefficients of $\mcS$ at $z_0\in\C\setminus\Sigma(\mcL)$, which we denote by
\begin{equation*}
\coeff{\mcS}{0,z_0}=\mcS(z_0)\quad\text{and}\quad\coeff{\mcS}{\alpha,z_0}=\frac{1}{\alpha!}\frac{\textup{d}^\alpha\mcS}{\textup{d}z^\alpha}(z_0)\quad\text{for }\alpha=1,2,\ldots\text{,}
\end{equation*}
by solving the problems
\begin{equation}
\label{eq:helmholtz_taylor1_parametric_weak}
\text{find }\coeff{\mcS}{\alpha,z_0}\in V\;:\;\left(\mcL-z_0\mcI\right)\coeff{\mcS}{\alpha,z_0}=\coeff{\mcS}{\alpha-1,z_0}\quad\text{for }\alpha=1,2,\ldots\text{.}
\end{equation}

%%%%%%%%%%%%%%%%%%%%%%%%%%%%%%%%%%%%%%%%%%%%%%%%%%%%%%%%%%%%%%%%%%%%%%%%%%%%%%%%%%
\subsection{Example: the Helmholtz solution map}
\label{sec:helmholtz_solmap}
%%%%%%%%%%%%%%%%%%%%%%%%%%%%%%%%%%%%%%%%%%%%%%%%%%%%%%%%%%%%%%%%%%%%%%%%%%%%%%%%%%

As an instance of the framework described above, we consider the solution map of the Helmholtz problem with parametric wavenumber and homogeneous Dirichlet boundary conditions, which has been considered by the authors also in \cite{Bonizzoni2016, Bonizzoni2018}.

Let $\Omega\subset\R^d$, $d\in\{1,2,3\}$, be an open Lipschitz bounded domain.
Given $z\in\C$, we consider the Helmholtz problem
\begin{equation}
\label{eq:helmholtz_parametric_weak_complex}
\begin{cases}
-\Delta\mcS(z)-z\mcS(z)=f\quad&\text{in }\Omega\\
\mcS(z)=0\quad&\text{on }\partial\Omega\text{,}
\end{cases}
\end{equation}
with $f\in L^2(\Omega)$. In particular, we cast the problem in the same functional setting as \eqref{eq:problem_parametric}, as
\begin{equation*}
\text{find }\mcS(z)\in D(\Delta)\;:\;\left(-\Delta-z\mcI\right)\mcS(z)=f\;\text{in }L^2(\Omega)\text{,}
\end{equation*}
where we have defined $D(\Delta)=\left\{v\in H^1_0(\Omega) : \Delta v\in L^2(\Omega)\right\}$ and $V=L^2(\Omega)$.

Standard results in functional analysis \cite{Gilbarg1977} can be used to prove that, with the choice of spaces introduced above, $\mcL=-\Delta$ satisfies the hypotheses of the previous section. In particular, it is bijective, and has self-adjoint (hence normal) and compact resolvent. Thus, $\mcS$ is meromorphic and has the form \eqref{eq:meromorphic_decomposition}, with $\Sigma(\mcL)\subset\R^+$ due to the positiveness of $\mcL$.

From \eqref{eq:eigenspaces}, it can be observed that all eigenspaces $V_\lambda$, for $\lambda\in\Sigma(\mcL)$, are subsets of $H^1_0(\Omega)$. Actually, as remarked also in \cite{Bonizzoni2016}, they are mutually orthogonal with respect to the $H^1_0(\Omega)$ inner product as well, and their direct sum in the topology of $H^1_0(\Omega)$ is dense in $H^1_0(\Omega)$. Thus, the spectral expansion \eqref{eq:meromorphic_decomposition} holds true also in $H^1_0(\Omega)$.

%%%%%%%%%%%%%%%%%%%%%%%%%%%%%%%%%%%%%%%%%%%%%%%%%%%%%%%%%%%%%%%%%%%%%%%%%%%%%%%%%%
\section{Least-Squares Pad\'e approximation}
\label{sec:Pade approximation of the solution map}
%%%%%%%%%%%%%%%%%%%%%%%%%%%%%%%%%%%%%%%%%%%%%%%%%%%%%%%%%%%%%%%%%%%%%%%%%%%%%%%%%%

In the setting of the previous section, it is reasonable to look for rational approximations of the map $\mcS(z)$. The following Least-Squares (LS) Pad\'e approximant of $\mcS$ centered at $z_0\in \C\setminus\Sigma(\mcL)$ was defined in \cite{Bonizzoni2016, Bonizzoni2018}.

\begin{definition}
    \label{def:pade_approximant}
    Let $z_0\in \C\setminus\Sigma(\mcL)$, $\rho\in\R$ and $N,M,E\in\N$ be fixed, with $\rho>0$ and $E\geq M+N$. Define the polynomial spaces
    \begin{equation}
    \label{eq:num_space}
    \PspaceV{M}{V}=\Bigg\{\mcP:\C\to V,\ \mcP(z)=\sum_{j=0}^Mp_j(z-z_0)^j\text{ with }\{p_j\}_{j=0}^M\subset V\Bigg\}
    \end{equation}
    and
    \begin{equation}
    \label{eq:den_space}
    \PspaceN{N}{z_0}=\Bigg\{Q:\C\to\C,\ Q(z)=\sum_{j=0}^Nq_j(z-z_0)^j\text{ with }\{q_j\}_{j=0}^N\subset\C,\sum_{j=0}^N\left|q_j\right|^2=1\Bigg\}\text{.}
    \end{equation}

    A $\MNl$ LS-Pad\'e approximant of $\mcS$ centered at $z_0$ (which depends parametrically on $E$ and $\rho$) is defined as
    \begin{equation}
    \label{eq:pade}
    \padebar(z)=\frac{\numbar(z)}{\denbar(z)}\text{,}
    \end{equation}
    with $\smash{(\numbar,\denbar)}\in\PspaceV{M}{V}\times\PspaceN{N}{z_0}$ a global minimizer of the functional $j_{E,\rho}:\PspaceV{M}{V}\times\PspaceN{N}{z_0}\to\R^+$, given by
    \begin{equation}
    \label{eq:functional_j}
    \jfun{\mcP}{Q}=\left(\sum_{\alpha=0}^E\normV{\coeff{Q\mcS-\mcP}{\alpha,z_0}}^2\rho^{2\alpha}\right)^{1/2}\text{.}
    \end{equation}
\end{definition}

The minimization of $j_{E,\rho}$ always admits at least one solution, i.e. a $\MNl$ LS-Pad\'e approximant always exists. Indeed, since $\mcP
\in\PspaceV{M}{V}$ has degree \review{at most $M$,}
\begin{equation*}
\jfun{\mcP}{Q}^2=\sum_{\alpha=0}^M\normV{\coeff{Q\mcS-\mcP}{\alpha,z_0}}^2\rho^{2\alpha}+\sum_{\alpha=M+1}^E\normV{\coeff{Q\mcS}{\alpha,z_0}}^2\rho^{2\alpha}\text{.}
\end{equation*}
In particular, for any fixed $Q\in\PspaceN{N}{z_0}$, a (unique) minimizer of
\begin{equation*}
\sum_{\alpha=0}^M\normV{\coeff{Q\mcS-\mcP}{\alpha,z_0}}^2\rho^{2\alpha}
\end{equation*}
over $\PspaceV{M}{V}$, which achieves the value 0, can always be found by imposing the conditions
\begin{equation}
\label{eq:optimal_numerator_standard}
\coeff{\mcP}{\alpha,z_0}=\coeff{Q\mcS}{\alpha,z_0}\quad\text{for }\alpha=0,\ldots,M\text{.}
\end{equation}

Thus, the minimization of $j_{E,\rho}$ can be split into two parts: first, the optimal denominator is computed by minimizing 
\begin{equation}
\label{eq:optimal_denominator_standard}
\overline{j}_{E,\rho}(Q)^2=\sum_{\alpha=M+1}^E\normV{\coeff{Q\mcS}{\alpha,z_0}}^2\rho^{2\alpha}
\end{equation}
over $\PspaceN{N}{z_0}$; a minimizer always exists since $\overline{j}_{E,\rho}(Q)$ is continuous and $\PspaceN{N}{z_0}$ is compact. Then the corresponding optimal numerator is found by enforcing \eqref{eq:optimal_numerator_standard}.

In~\cite{Bonizzoni2016}, the convergence of LS-Pad\'e approximants to the solution map $\mcS$, as $M$ increases while $N$ stays constant, was proven. We recall the result for completeness.

\begin{theorem}{\bf\cite[Theorem 2.4]{Bonizzoni2018}}
	\label{th:pade_approx_S}
	Let $N\in\N$ be fixed. Consider $R>0$ such that the disk $\ball{z_0}{R}=\{z\in\C,\abs{z-z_0}<R\}$ contains at most $N$ poles of $\mcS$, with no element of $\Sigma(\mcL)$ on the boundary $\partial\ball{z_0}{R}$.
	
	Given $\rho<R$, denote by $\padebar$ the $\MNl$ LS-Pad\'e approximant of $\mcS$ at $z_0$ with parameters $E$ and $\rho$. Then, for all $z\in\ball{z_0}{\rho}\setminus\Sigma(\mcL)$ there exists $M^\star\in\N$ such that
	\begin{equation}
	\label{eq:pade_approx_S}
	\normV{\mcS(z)-\padebar(z)}\leq C\left(\frac{\rho}{R}\right)^{M}\quad\text{for all }M\geq M^\star\text{,}
	\end{equation}
	where $C$ depends on $z_0$, $\rho$, $R$, $E-M$, $N$, $\Sigma(\mcL)$, $\min_{\lambda\in\Sigma(\mcL)}\abs{z-\lambda}$, and $\normV{v^\star}$.
\end{theorem}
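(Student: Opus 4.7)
The plan is to compare $\denbar$ with an explicitly constructed ``ideal'' denominator and exploit the minimality defining $\denbar$. First I would build $\tilde Q\in\PspaceN{N}{z_0}$ whose roots coincide with the (at most $N$) poles of $\mcS$ inside $\ball{z_0}{R}$, padded by any extra roots outside $\overline{\ball{z_0}{R}}$ to reach exact degree $N$, and rescaled so that its coefficient vector has unit $\ell^2$-norm. Using~\eqref{eq:meromorphic_decomposition} together with the fact that $\partial\ball{z_0}{R}$ is disjoint from $\Sigma(\mcL)$, the product $\tilde Q\,\mcS$ extends to a holomorphic $V$-valued map on an open neighborhood of $\overline{\ball{z_0}{R}}$. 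A Cauchy estimate for Hilbert space-valued holomorphic maps then gives $\|\coeff{\tilde Q\mcS}{\alpha,z_0}\|_V\le C_1 R^{-\alpha}$, so that summing the geometric tail in~\eqref{eq:optimal_denominator_standard} yields
\begin{equation*}
\overline j_{E,\rho}(\tilde Q)^2\le C_1^2\sum_{\alpha=M+1}^E(\rho/R)^{2\alpha}\le C_2\,(\rho/R)^{2(M+1)}.
\end{equation*}
Since $\denbar$ globally minimizes $\overline j_{E,\rho}$ over $\PspaceN{N}{z_0}$, the same $O((\rho/R)^M)$ bound holds with $\denbar$ in place of $\tilde Q$.

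Next I would exploit the smallness of $\overline j_{E,\rho}(\denbar)$ to show that, for $M$ large, the roots of $\denbar$ cluster near the poles of $\mcS$ inside $\ball{z_0}{R}$ and that $|\denbar(z)|$ is bounded below on compact subsets of $\ball{z_0}{\rho}\setminus\Sigma(\mcL)$. Compactness of the unit sphere $\PspaceN{N}{z_0}$ allows the extraction of a subsequence along which $\denbar\to Q^\infty$; passing to the limit, the Taylor coefficients of $Q^\infty\mcS$ at $z_0$ of order between $M+1$ and $E$ vanish in $V$, which forces $Q^\infty$ to cancel every pole of $\mcS$ inside $\ball{z_0}{R}$ (otherwise a residual pole would obstruct the simultaneous vanishing of that many consecutive Taylor coefficients of $Q^\infty\mcS$). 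Hence $Q^\infty$ agrees with $\tilde Q$ up to the irrelevant choice of padding roots, which identifies the whole-sequence limit and, by continuity of the polynomial evaluation, yields both the root-clustering and the quantitative lower bound on $|\denbar(z)|$.

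Finally, writing
\begin{equation*}
\mcS(z)-\padebar(z)=\frac{\denbar(z)\mcS(z)-\numbar(z)}{\denbar(z)}
\end{equation*}
and recalling~\eqref{eq:optimal_numerator_standard}, the first $M+1$ Taylor coefficients of the numerator at $z_0$ vanish. I would split the remainder of its Taylor series into the block $\alpha\in\{M+1,\ldots,E\}$, controlled via Cauchy--Schwarz by $\overline j_{E,\rho}(\denbar)=O((\rho/R)^M)$, and the tail $\alpha>E$, estimated, once $\denbar$ is close to $\tilde Q$, by a Cauchy bound on $|z-z_0|=R$ applied to the (now holomorphic) $V$-valued map $\denbar\mcS$. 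Dividing by $|\denbar(z)|$, which is bounded from below on $\ball{z_0}{\rho}\setminus\Sigma(\mcL)$ for $M\ge M^\star$, delivers~\eqref{eq:pade_approx_S}. The main obstacle is this last step: extracting a quantitative lower bound on $|\denbar(z)|$ away from the spectrum and showing that $\denbar\mcS$ becomes holomorphic on $\overline{\ball{z_0}{R}}$ for $M$ large, which is exactly what necessitates the threshold $M^\star$ and the dependence of $C$ on $\min_{\lambda\in\Sigma(\mcL)}|z-\lambda|$.
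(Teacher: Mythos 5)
First, a point of reference: this paper does not prove Theorem~\ref{th:pade_approx_S} at all --- it is recalled verbatim from \cite{Bonizzoni2018} (see also \cite{Bonizzoni2016}) --- so your attempt can only be measured against the proof given there and against the analogous machinery this paper develops for the fast variant in Sections~\ref{sec:conv_denominator_fast}--\ref{sec:conv_pade_fast}. Your opening and closing steps are the correct skeleton and do match that proof: comparing $\denbar$ with an exact denominator $\tilde Q$, using a Cauchy estimate for the holomorphic map $\tilde Q\mcS$ on a disk slightly larger than $\ball{z_0}{R}$ together with the minimality of $\denbar$ to get $\overline{j}_{E,\rho}(\denbar)\lesssim(\rho/R)^{M+1}$, and then writing the error as $(\denbar\mcS-\numbar)/\denbar$ with the first $M+1$ Taylor coefficients of the numerator of this quotient vanishing.

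The genuine gap is in the middle step, and it is not cosmetic. (i) The assertion that ``passing to the limit, the Taylor coefficients of $Q^\infty\mcS$ of order between $M+1$ and $E$ vanish'' is not meaningful: the index window $\{M+1,\ldots,E\}$ moves with $M$, and convergence $\denbar\to Q^\infty$ in the finite-dimensional coefficient space does not control high-order Taylor coefficients of $\denbar\mcS$ against those of $Q^\infty\mcS$, since $\normV{\coeff{\mcS}{\alpha,z_0}}$ grows geometrically in $\alpha$. (ii) Even granting that every subsequential limit vanishes at the poles inside the disk, this identifies $Q^\infty$ only when $\ball{z_0}{R}$ contains exactly $N$ poles; if it contains $N'<N$, the remaining $N-N'$ roots are unconstrained, the whole-sequence limit is not determined, and the lower bound on $\abso{\denbar(z)}$ at your fixed $z$ does not follow --- an extra root could a priori drift onto $z$. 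This is precisely where the pointwise threshold $M^\star=M^\star(z)$ must be produced, and your argument does not produce it. (iii) The tail $\alpha>E$ cannot be dispatched by a Cauchy estimate for $\denbar\mcS$: that map is never holomorphic on $\overline{\ball{z_0}{R}}$, only approximately so. What is actually needed --- and what the cited proof, and Lemmas~\ref{th:fast_optimality} and~\ref{th:den_at_pole_convergence} here for the fast variant, supply --- is a \emph{quantitative} decay rate for $\abso{\denbar(\lambda)}$ at each pole $\lambda\in\ball{z_0}{R}$, extracted directly from the spectral representation of the functional (as in \eqref{eq:quadratic_simplified_poly}, using the $V$-orthogonality of the $P_\lambda v^\star$), not from compactness. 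Indeed, $\coeff{\denbar\mcS}{\alpha,z_0}=\sum_k v^\star_k\,\denbar(\lambda_k)(\lambda_k-z_0)^{-\alpha-1}$ for $\alpha\geq N$, and for a pole with $\abso{\lambda_k-z_0}<\abso{z-z_0}$ the factor $\left(\abso{z-z_0}/\abso{\lambda_k-z_0}\right)^{\alpha}$ grows geometrically in $\alpha$; without an explicit rate on $\denbar(\lambda_k)\to0$ the tail of the series for $\denbar(z)\mcS(z)-\numbar(z)$ is not even bounded, let alone $O((\rho/R)^{M})$. A qualitative subsequence argument provides no such rate, so the exponential estimate \eqref{eq:pade_approx_S} is not reached.
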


Several numerical experiments \cite{Bonizzoni2016, Bonizzoni2018} in the case of Helmholtz frequency response problems, lead to believe that the bound \eqref{eq:pade_approx_S} may not be sharp with respect to $\rho$. Actually, no appreciable dependence of the error on $\rho$ has been detected, and the empirically observed convergence rate in $M$ for fixed $N$ was
\begin{equation}
\label{eq:pade_approx_S_empirical}
\normV{\mcS(z)-\padebar(z)}\leq C'\left(\frac{|z-z_0|}{R}\right)^M\text{,}
\end{equation}
see~\cite[Remark 7.1]{Bonizzoni2016} and \cite[Section 4.2]{Bonizzoni2018}, even when $z\in\ball{z_0}{R}\setminus\ball{z_0}{\rho}$, a case which is not addressed by Theorem~\ref{th:pade_approx_S}.

%%%%%%%%%%%%%%%%%%%%%%%%%%%%%%%%%%%%%%%%%%%%%%%%%%%%%%%%%%%%%%%%%%%%%%%%%%%%%%%%%%
\section{Fast LS-Pad\'e approximants}
\label{sec:modified algorithm}
%%%%%%%%%%%%%%%%%%%%%%%%%%%%%%%%%%%%%%%%%%%%%%%%%%%%%%%%%%%%%%%%%%%%%%%%%%%%%%%%%%

As the dependence on $\rho$ of the approximation error appears empirically negligible, we may wish to derive a simplified version of LS-Pad\'e approximant that does not depend on $\rho$. Accordingly, we consider the following definition.

\begin{definition}
    \label{def:pade_simplified}
    Let $z_0\in\C\setminus\Sigma(\mcL)$, and $N,M,E\in\N$ be fixed, so that $E\geq\max\{M,N\}$. We define a $\MNl$ fast LS-Pad\'e approximant of $\mcS$ centered at $z_0$ (which depends parametrically on $E$) as
    \begin{equation}
    \label{eq:pade_simplified}
    \pade(z)=\frac{\num(z)}{\den(z)}\text{,}
    \end{equation}
    with $\den\in\PspaceN{N}{z_0}$ being a global minimizer of the functional $\smash{\widetilde{j}_{E}}:\PspaceN{N}{z_0}\to\R^+$, given by
    \begin{equation}
    \label{eq:functional_j_tilde}
    \jfuntilde{Q}=\normV{\coeff{Q\mcS}{E,z_0}}\text{,}
    \end{equation}
    and $\num\in\PspaceV{M}{V}$ satisfying
    \begin{equation*}
    \coeff{\num}{\alpha,z_0}=\coeff{\den\mcS}{\alpha,z_0}\quad\text{for }\alpha=0,\ldots,M\text{.}
    \end{equation*}
\end{definition}

Fast LS-Pad\'e approximants can be formally interpreted as the limit for large $\rho$ of standard LS-Pad\'e approximants given in Definition~\ref{def:pade_approximant}, since the simplified functional $\widetilde{j}_E$ in \eqref{eq:functional_j_tilde} (and, consequently, its minimizer) can be obtained from \eqref{eq:optimal_denominator_standard} by letting $\rho$ tend to $\infty$. To understand what this simplification entails, it is useful to interpret the vectors of coefficients of the denominators $\den$ and $\denbar$ as eigenvectors of Hermitian matrices, as follows.

Any element $Q\in\PspaceN{N}{z_0}$ is uniquely identified by the vector $\qq=(q_j)_{j=0}^N\in\C^{N+1}$ of its components with respect to the basis $\smash{\big((\,\cdot-z_0)^{N-j}\big)_{j=0}^N}$, so that
\begin{equation*}
Q(z)=\sum_{j=0}^N\coeff{Q}{N-j,z_0}(z-z_0)^{N-j}=\sum_{j=0}^Nq_j(z-z_0)^{N-j}\text{.}
\end{equation*}
In particular, as $Q$ is normalized, $\qq$ belongs to the unit sphere of $\C^{N+1}$.

Using this representation, we can express
\begin{align}
\widetilde{j}_E(Q)^2&=\normV{\coeff{Q\mcS}{E,z_0}}^2=\normV{\sum_{j=0}^N\coeff{Q}{N-j,z_0}\coeff{\mcS}{E-N+j,z_0}}^2\nonumber\\
&=\dualV{\sum_{j=0}^Nq_j\coeff{\mcS}{E-N+j,z_0}}{\sum_{i=0}^Nq_i\coeff{\mcS}{E-N+i,z_0}}\nonumber\\
&=\sum_{i=0}^N\sum_{j=0}^N\dualV{\coeff{\mcS}{E-N+j,z_0}}{\coeff{\mcS}{E-N+i,z_0}}\,q_jq_i^*\nonumber\\
&=\qq^*\Gtilde\qq\text{,}\label{eq:simple_functional}
\end{align}
where the unary operator $\textsuperscript{*}$ denotes complex conjugation for scalars and Hermitian transposition for vectors and matrices. In particular, we have defined $\Gtilde\in\C^{(N+1)\times(N+1)}$ as the Hermitian positive semidefinite Gramian matrix whose entries are given by
\begin{equation}
\label{eq:simple_gram}
\left(\Gtilde\right)_{i,j}=\dualV{\coeff{\mcS}{E-N+j,z_0}}{\coeff{\mcS}{E-N+i,z_0}}\quad\text{for }i,j=0,\ldots,N\text{.}
\end{equation}

From equation \eqref{eq:simple_functional}, we infer that a minimizer of $\widetilde{j}_E(Q)$ is a (normalized) eigenvector of $\Gtilde$ corresponding to the minimal eigenvalue. This allows us to compute fast LS-Pad\'e approximants using Algorithm~\ref{algo:fast} below. In practice, the matrix $\Gtilde$ need not be built explicitly, and a better conditioned eigenproblem can be solved instead, as detailed in Section~\ref{sec:algo_details}.

%\begin{breakablealgorithm}
\begin{algorithm}
    \caption{Computation of fast LS-Pad\'e approximants}
    \label{algo:fast}
    \begin{spacing}{1}
    \begin{algorithmic}[1]
        \STATE Fix $z_0\in\C\setminus\Sigma(\mcL)$, $M,N,E\in\N$, with $E\geq\max\{M,N\}$;
        \STATE Compute the Taylor coefficients $\coeff{\mcS}{\alpha,z_0}$ for $\alpha=0,\ldots,E$, by solving \eqref{eq:problem_parametric} and \eqref{eq:helmholtz_taylor1_parametric_weak};
        \STATE Build the Hermitian positive semidefinite Gramian matrix $\Gtilde$ as in \eqref{eq:simple_gram};
        \STATE Compute a normalized eigenvector $\qq\in\C^{N+1}$ of $\Gtilde$ corresponding to the minimal eigenvalue;
        \STATE Define the Pad\'e denominator as $\den=\sum_{j=0}^Nq_j(\cdot-z_0)^{N-j}$;
        \STATE Compute the Taylor coefficients $\coeff{\den\mcS}{\alpha,z_0}$ for $\alpha=0,\ldots,M$;
        \STATE Compute the numerator $\num:=\sum_{\alpha=0}^M\coeff{\den\mcS}{\alpha,z_0}(\,\cdot-z_0)^\alpha$;
        \STATE Return $\pade=\num/\den$.
    \end{algorithmic}
    \end{spacing}
%\end{breakablealgorithm}
\end{algorithm}

\begin{remark}\label{re:standard_functional}
A similar derivation can be carried out for $\overline{j}_{E,\rho}$ in \eqref{eq:optimal_denominator_standard}, see \cite[Proposition~3.2]{Bonizzoni2018}. In particular, \eqref{eq:simple_functional} becomes
\begin{equation}
\label{eq:standard_functional}
\overline{j}_{E,\rho}(Q)^2=\qq^*\left(\sum_{\gamma=M+1}^E\rho^{2\gamma}\widetilde{G}_\gamma\right)\qq\text{.}
\end{equation}
\end{remark}

In \eqref{eq:standard_functional}, each of the $(N+1)\times(N+1)$ matrices $\widetilde{G}_\gamma$, for $\gamma\geq N$, can be obtained as a diagonal block of the infinite-dimensional Gramian matrix based on the derivatives of $\mcS$, whose entries are defined as
\begin{equation}
\label{eq:full_gram}
\left(G\right)_{i,j}=\dualV{\coeff{\mcS}{j,z_0}}{\coeff{\mcS}{i,z_0}}\quad\text{for }i,j\in\N\text{,}
\end{equation}
see Figure~\ref{fig:gram_matrix}. The matrices $\widetilde{G}_\gamma$ for $\gamma<N$ can be obtained similarly, by adding zero-padding to $G$, or equivalently by defining $\coeff{\mcS}{\alpha,z_0}=0$ for $\alpha<0$.

\begin{figure}[ht]
    \begin{gather*}
    \tikz[remember picture]{\node(c){\hspace{9cm}\Large$\widetilde{G}_3$};}\\[-.75cm]
    \tikz[remember picture,overlay]{
        \draw[-stealth,thick] (c.south)++(4.15cm,.35cm) -- ++(-.9cm,-.4cm);
    }
    G=\left[
    \setlength\arrayrulewidth{0pt}
    \begin{tabular}{cccccc}
    $\dualV{\mcS_0}{\mcS_0}$ & $\dualV{\mcS_1}{\mcS_0}$ &
    $\dualV{\mcS_2}{\mcS_0}$ & $\ldots$ & &\\
    $\dualV{\mcS_0}{\mcS_1}$ & 	
    \cellcolor{blue!25}$\dualV{\mcS_1}{\mcS_1}$ &
    \cellcolor{blue!25}$\dualV{\mcS_2}{\mcS_1}$ &
    \cellcolor{blue!25}$\dualV{\mcS_3}{\mcS_1}$ & 		
    $\ldots$\\
    $\dualV{\mcS_0}{\mcS_2}$ & 		
    \cellcolor{blue!25}$\dualV{\mcS_1}{\mcS_2}$ &
    \cellcolor{blue!25}$\dualV{\mcS_2}{\mcS_2}$ &
    \cellcolor{blue!25}$\dualV{\mcS_3}{\mcS_2}$ & 		
    $\dualV{\mcS_4}{\mcS_2}$ &
    $\ldots$\\
    $\smash\vdots$ & 		
    \cellcolor{blue!25}$\dualV{\mcS_1}{\mcS_3}$ &
    \cellcolor{blue!25}$\dualV{\mcS_2}{\mcS_3}$ &
    \cellcolor{blue!25}$\dualV{\mcS_3}{\mcS_3}$ & 		
    $\dualV{\mcS_4}{\mcS_3}$ &
    $\ldots$\\
    & $\smash\vdots$ &
    $\dualV{\mcS_2}{\mcS_4}$ &
    $\dualV{\mcS_3}{\mcS_4}$ &
    $\dualV{\mcS_4}{\mcS_4}$ &
    $\ldots$\\
    && $\smash\vdots$ & $\smash\vdots$ & $\smash\vdots$ &
    \end{tabular}
    \right]
    \end{gather*}
    \caption{Gramian matrix associated to the map $\mcS$ through the scalar product $\dualV{\cdot}{\cdot}$. To lighten the notation, we write $\mcS_\alpha$ instead of $\coeff{\mcS}{\alpha,z_0}$ (for $\alpha\in\N$) to denote a Taylor coefficient of $\mcS$ at $z_0$. In blue the sub-matrix extracted for $N=2$ and $E=3$, which corresponds to $\smash{\widetilde{G}_3}$.}
    \label{fig:gram_matrix}
\end{figure}

Within this framework, the computation of standard LS-Pad\'e approximants relies on a combination of Gramian blocks, see \eqref{eq:standard_functional}, while, for the same value of $E$, fast approximants only consider the last of these blocks, i.e. the one on the bottom-right.

In the next Section, we derive some properties of Pad\'e denominators, by exploiting features of the Gramian matrix $G$. In particular, we show that diagonal blocks which are related to derivatives of higher order lead to a more accurate estimation of the poles of $\mcS$. As such, in choosing the parameters for standard LS-Pad\'e approximants, we may want to opt for larger values of $\rho$, in order to enhance the contribution of high-order derivatives of $\mcS$. Therefore, fast Pad\'e denominators provide a better approximation of the poles of $\mcS$ than standard Pad\'e ones.

%%%%%%%%%%%%%%%%%%%%%%%%%%%%%%%%%%%%%%%%%%%%%%%%%%%%%%%%%%%%%%%%%%%%%%%%%%%%%%%%%%
\section{Convergence of fast LS-Pad\'{e} denominators}
\label{sec:conv_denominator_fast}
%%%%%%%%%%%%%%%%%%%%%%%%%%%%%%%%%%%%%%%%%%%%%%%%%%%%%%%%%%%%%%%%%%%%%%%%%%%%%%%%%%

From here onwards, we will assume without loss of generality that all removable singularities of $\mcS$ have been discarded, i.e. that $v^\star$ is such that $P_\lambda v^\star\neq 0$ for all $\lambda\in\Sigma(\mcL)$. This is not a limiting assumption, since from \eqref{eq:meromorphic_decomposition} it is clear that the poles of $\mcS $ are $\{\lambda\in\Sigma(\mcL):P_\lambda v^\star\neq 0\}$, so that we are entitled to ignore those elements $\lambda\in\Sigma(\mcL)$ for which $P_\lambda v^\star=0$. 

Moreover, we denote by $\{\lambda_\alpha\}_{\alpha=1}^\infty$ the elements of $\Sigma(\mcL)$, ordered in such a way that $|\lambda_\alpha-z_0|\leq|\lambda_{\alpha+1}-z_0|$ for $\alpha=1,2,\ldots$, and we set $v^\star_\alpha=P_{\lambda_\alpha}v^\star$ for $\alpha=1,2,\ldots$. Additionally, we assume that $z_0\in\C\setminus\Sigma(\mcL)$ is fixed and that $\Sigma(\mcL)$ consists of infinitely many elements, unless otherwise explicitly stated (this is just to simplify the notation, since all the results below apply to the finite-dimensional case as well).

\review{In Theorem~\ref{th:pole_convergence} below, we prove that, for a fixed denominator degree $N>0$, the poles of the fast LS-Pad\'e approximant with denominator $\den$ (see Definition~\ref{def:pade_simplified}) converge to the closest poles of $\mcS$, as the number of employed derivatives $E$ goes to $\infty$. More precisely, denoting by $\{\widetilde{\lambda}_\beta^{(E)}\}_{\beta=1}^{N}$ the roots of $\den$, we prove that, for $\alpha=1,\ldots,N$,
\begin{equation*}
    \min_{\beta=1,\ldots,N}\left|\widetilde{\lambda}_\beta^{(E)}-\lambda_\alpha\right|\lesssim\left|\frac{\lambda_\alpha-z_0}{\lambda_{N+1}-z_0}\right|^{2E}\text{,} 
\end{equation*}
where the hidden constant depends on $\alpha$ but is independent of $E$. In order to do that, after rewriting the target functional $\smash{\widetilde{j}_E}$ in Definition~\ref{def:pade_simplified} in a convenient way, we prove three preliminary results in Lemma~\ref{th:interp_bounds}, Lemma~\ref{th:fast_optimality}, and Lemma~\ref{th:den_at_pole_convergence}. We conclude this section by proving convergence of the poles of the fast LS-Pad\'e approximant to the closest poles of $\mcS$ also for increasing $N$, see Theorem~\ref{th:pole_convergence_N} below.}

\review{We start by deriving a useful} alternative expression for $\smash{\widetilde{j}_E}$ in Definition~\ref{def:pade_simplified}. Thanks to \eqref{eq:meromorphic_decomposition}, we can compute each Taylor coefficient of $\mcS$ at $z_0$ as
\begin{equation}
\label{eq:meromorphic_decomposition_taylor}
\coeff{\mcS}{\gamma,z_0}=\sum_{\alpha=1}^\infty\coeff{(\lambda_\alpha-\cdot)^{-1}}{\gamma,z_0}v^\star_\alpha=\sum_{\alpha=1}^\infty\frac{v^\star_\alpha}{(\lambda_\alpha-z_0)^{\gamma+1}}\text{,}
\end{equation}
so that we can express $\jfuntilde{Q}$, for $Q\in\PspaceN{N}{z_0}$, as
\begin{align}
	\widetilde{j}_E(Q)^2&=\normV{\sum_{j=0}^N\coeff{Q}{N-j,z_0}\coeff{\mcS}{E-N+j,z_0}}^2\nonumber\\
	&=\normV{\sum_{\alpha=1}^\infty\sum_{j=0}^N\coeff{Q}{N-j,z_0}(\lambda_\alpha-z_0)^{N-j}\frac{v^\star_\alpha}{(\lambda_\alpha-z_0)^{E+1}}}^2\nonumber\\
	&=\normV{\sum_{\alpha=1}^\infty\frac{v^\star_\alpha}{(\lambda_\alpha-z_0)^{E+1}}Q(\lambda_\alpha)}^2\nonumber\\
    &=\sum_{\alpha=1}^\infty\frac{\normV{v^\star_\alpha}^2}{\left|\lambda_\alpha-z_0\right|^{2E+2}}\left|Q(\lambda_\alpha)\right|^2\text{,}\label{eq:quadratic_simplified_poly}
\end{align}
by the $V$-orthogonality of $\{v^\star_\alpha\}_{\alpha=1}^\infty$.

\review{The first technical lemma} provides some bounds on normalized polynomials in terms of their roots.

\begin{lemma}
    \label{th:interp_bounds}
    Let $Q\in\PspaceN{N}{z_0}$ have (possibly non-distinct) roots $z_1,\ldots,z_N$. For any $z\in\C$ \review{we have the lower bound:}
    \begin{equation}
    \label{eq:interp_bound1}
    \left|Q(z)\right|\geq\prod_{\alpha=1}^N\frac{\left|z_\alpha-z\right|}{1+\left|z_\alpha-z_0\right|}\text{.}
    \end{equation}
    Moreover, if $z_0\notin\{z_\alpha\}_{\alpha=1}^N$, \review{the following upper bound holds true for all $z\in\C$:}
    \begin{equation}
    \label{eq:interp_bound2}
    \left|Q(z)\right|\leq\prod_{\alpha=1}^N\left|\frac{z_\alpha-z}{z_\alpha-z_0}\right|\text{.}
    \end{equation}
\end{lemma}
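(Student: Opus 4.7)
The key observation is that a polynomial $Q \in \PspaceN{N}{z_0}$ with roots $z_1,\ldots,z_N$ can be factored as $Q(z) = q_N \prod_{\alpha=1}^N (z-z_\alpha)$, so both bounds reduce to controlling the leading coefficient $|q_N|$ using the normalization $\sum_{j=0}^N |q_j|^2 = 1$.

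For the \emph{lower bound} \eqref{eq:interp_bound1}, the plan is to expand the factored form in the shifted basis. Writing $(z-z_\alpha) = (z-z_0) - (z_\alpha - z_0)$ and using elementary symmetric polynomials $e_k$, one finds
\begin{equation*}
q_j = q_N\, (-1)^{N-j}\, e_{N-j}(z_1-z_0,\ldots,z_N-z_0)\quad\text{for }j=0,\ldots,N\text{.}
\end{equation*}
Applying the triangle inequality gives $|q_j| \leq |q_N|\, e_{N-j}(|z_1-z_0|,\ldots,|z_N-z_0|)$. Since the quantities $e_k(|z_1-z_0|,\ldots,|z_N-z_0|)$ are all nonnegative, we can bound $\sum_k e_k^2 \leq (\sum_k e_k)^2$, and then use the classical identity $\sum_{k=0}^N e_k(a_1,\ldots,a_N) = \prod_{\alpha=1}^N(1+a_\alpha)$ valid for $a_\alpha \geq 0$. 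Combined with the normalization, this yields
\begin{equation*}
1 = \sum_{j=0}^N |q_j|^2 \leq |q_N|^2 \prod_{\alpha=1}^N (1+|z_\alpha - z_0|)^2\text{,}
\end{equation*}
from which $|q_N| \geq \prod_{\alpha=1}^N (1+|z_\alpha-z_0|)^{-1}$ follows. Substituting back into $|Q(z)| = |q_N|\prod_\alpha |z-z_\alpha|$ gives \eqref{eq:interp_bound1}.

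For the \emph{upper bound} \eqref{eq:interp_bound2}, the approach is simpler: since $z_0$ is not a root, evaluating the factored form at $z_0$ yields $q_0 = Q(z_0) = q_N \prod_{\alpha=1}^N (z_0 - z_\alpha)$, and the normalization immediately gives $|q_0| \leq 1$, hence $|q_N| \leq \prod_{\alpha=1}^N |z_\alpha-z_0|^{-1}$. Substituting into the factored form yields \eqref{eq:interp_bound2}.

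The main technical point is the lower-bound argument, specifically the bookkeeping with elementary symmetric polynomials and recognizing the telescoping identity that converts $\sum_k e_k$ into the product $\prod_\alpha(1+|z_\alpha-z_0|)$. The rest is essentially direct.
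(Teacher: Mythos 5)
Your proof is correct. Structurally it mirrors the paper's argument: both proofs factor $Q$ as a scalar multiple of the monic polynomial $\prod_{\alpha=1}^N(z-z_\alpha)$ and then bound that scalar (your $|q_N|$, the paper's $\tau$) from below and above using the normalization $\sum_{j=0}^N|\coeff{Q}{j,z_0}|^2=1$. The difference lies in the tools used to bound it. The paper converts the sum of squared Taylor coefficients into a contour integral via the Hadamard multiplication theorem, $\tau^{-2}=\int_0^1|\ell_N(z_0+e^{2\pi i\theta})|^2\,\d\theta$, and then applies the triangle inequality (for the lower bound on $|Q|$) and Cauchy--Schwarz in $L^2(0,1)$ (for the upper bound). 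You instead stay entirely at the level of coefficients: for the lower bound you expand $\prod_\alpha\bigl((z-z_0)-(z_\alpha-z_0)\bigr)$ in elementary symmetric polynomials and use $\sum_k e_k^2\leq\bigl(\sum_k e_k\bigr)^2=\prod_\alpha(1+|z_\alpha-z_0|)^2$; for the upper bound you simply note that $Q(z_0)=\coeff{Q}{0,z_0}$ has modulus at most $1$. Your route is more elementary and self-contained (no appeal to the Hadamard/Parseval identity), and the upper-bound step is particularly transparent --- it makes explicit that the paper's Cauchy--Schwarz step is really just the statement that one Fourier coefficient is dominated by the full $\ell^2$ norm. The paper's integral representation, on the other hand, packages both bounds into a single formula for $\tau^{-2}$ and is the form that generalizes most naturally to sharper estimates on circles of other radii. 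One shared implicit assumption, not a gap relative to the paper: both arguments presuppose that $Q$ genuinely has degree $N$ (i.e.\ $q_N\neq 0$), which is what the hypothesis ``$Q$ has roots $z_1,\ldots,z_N$'' encodes.
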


\begin{proof}
	We can express $Q$ as a normalized interpolation polynomial: there exists $\tau>0$ such that
    \begin{equation}
    \label{eq:exact_den_interp}
    \left|Q(z)\right|=\tau\left|\ell_N(z)\right|\text{,}
    \end{equation}
    where
    \begin{equation}
    \label{eq:exact_den_lag_interp}
    \ell_N(z)=\prod_{\alpha=1}^N\left(z_\alpha-z\right)\text{.}
    \end{equation}
    
    Due to the normalization of $Q$, \review{we have that}
    \begin{equation*}
    \tau^{-2}=\tau^{-2}\sum_{j=0}^N\left|\coeff{Q}{j,z_0}\right|^2=\sum_{j=0}^N\left|\coeff{\ell_N}{j,z_0}\right|^2\text{,}
    \end{equation*}
	which can be evaluated using the Hadamard multiplication theorem \cite[Section 4.6]{Titchmarsh1978}:
    \begin{equation}
    \label{eq:exact_den_scaling}
    \tau^{-2}=\int_0^1\abs{\ell_N\left(z_0+e^{2\pi i\theta}\right)}^2\d\theta=\int_0^1\prod_{\alpha=1}^N\abs{z_\alpha-z_0-e^{2\pi i\theta}}^2\d\theta\text{.}
    \end{equation}
    The two claims will follow from \eqref{eq:exact_den_interp} and \eqref{eq:exact_den_lag_interp} by employing an upper and a lower bound for $\tau^{-2}$, respectively:
	\begin{itemize}
	\item the triangular inequality yields
    \begin{equation*}
    \tau^{-2}\leq\int_0^1\prod_{\alpha=1}^N\left(\abs{z_\alpha-z_0}+\abs{e^{2\pi i\theta}}\right)^2\d\theta=\prod_{\alpha=1}^N\left(\abs{z_\alpha-z_0}+1\right)^2\text{,}
    \end{equation*}
    from which \eqref{eq:interp_bound1} follows;
	\item the Cauchy-Schwarz inequality in $L^2(0,1)$ applied to \eqref{eq:exact_den_scaling} allows to derive
    \begin{align*}
    \tau^{-2}\geq&\abs{\int_0^1\prod_{\alpha=1}^N\left(z_\alpha-z_0-e^{2\pi i\theta}\right)\d\theta}^2\\
    =&\abs{\int_0^1\left(\prod_{\alpha=1}^N\left(z_\alpha-z_0\right)+\sum_{j=1}^Nc_je^{2\pi ij\theta}\right)\d\theta}^2\text{,}
    \end{align*}
    for some coefficients $\{c_j\}_{j=1}^N\subset\C$ independent of $\theta$, whose exact expression is not relevant; indeed, by linearity, \review{it can be shown that}
    \begin{equation*}
	\tau^{-2}\geq\abs{\prod_{\alpha=1}^N\left(z_\alpha-z_0\right)+\sum_{j=1}^Nc_j\int_0^1e^{2\pi ij\theta}\d\theta}^2=\abs{\prod_{\alpha=1}^N\left(z_\alpha-z_0\right)}^2\text{,}
	\end{equation*}
    leading to \eqref{eq:interp_bound2}.
	\end{itemize}	    
\end{proof}

\begin{remark}
    \label{re:interp_continuity}
	In the proof of Lemma~\ref{th:interp_bounds}, it can be observed that both $\tau$ and the absolute value of the interpolation polynomial $\abs{\ell_N(z)}$ at any point $z\in\C$ depend continuously on the roots of $Q$, see \eqref{eq:exact_den_scaling} and \eqref{eq:exact_den_lag_interp}, respectively. Thus, due to \eqref{eq:exact_den_interp}, $\abs{Q(z)}$ depends continuously on the roots of $Q$ as well, for all $z\in\C$.
\end{remark}

The second lemma establishes a sort of optimality bound for fast Pad\'e denominators.

\begin{lemma}
    \label{th:fast_optimality}
    Let $\smash{\widetilde{j}_{E}}$ and $\den$ be the target functional and the fast Pad\'{e} denominator, respectively, as in Definition~\ref{def:pade_simplified}. \review{Then,}
    \begin{equation}
    \label{eq:fast_optimality}
    \widetilde{j}_E\left(\den\right)\leq\frac{C'}{|\lambda_{N+1}-z_0|^{E+1}}\text{,}
    \end{equation}
    with
    \begin{equation}
    \label{eq:fast_optimality_const}
    C'=\normV{v^\star}\prod_{\alpha=1}^N\left(1+\abs{\frac{\lambda_{N+1}-z_0}{\lambda_\alpha-z_0}}\right)\text{.}
    \end{equation}
\end{lemma}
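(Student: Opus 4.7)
The plan is to exploit the optimality of $\den$ by constructing an explicit competitor polynomial, and then estimate the functional $\widetilde{j}_E$ on that competitor using the upper bound \eqref{eq:interp_bound2} from Lemma~\ref{th:interp_bounds}.

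First, I would choose the test polynomial $Q\in\PspaceN{N}{z_0}$ to be the unique (up to phase) normalized polynomial having the $N$ poles of $\mcS$ closest to $z_0$, namely $\lambda_1,\ldots,\lambda_N$, as its roots. Since $\den$ minimizes $\widetilde{j}_E$ over $\PspaceN{N}{z_0}$, this gives $\widetilde{j}_E(\den)\leq\widetilde{j}_E(Q)$. Using the expansion \eqref{eq:quadratic_simplified_poly}, and observing that $Q(\lambda_\alpha)=0$ for $\alpha=1,\ldots,N$, the sum collapses to the tail
\begin{equation*}
\widetilde{j}_E(Q)^2=\sum_{\alpha=N+1}^{\infty}\frac{\normV{v^\star_\alpha}^2}{|\lambda_\alpha-z_0|^{2E+2}}|Q(\lambda_\alpha)|^2\text{.}
\end{equation*}

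Next, since $z_0\notin\Sigma(\mcL)\supset\{\lambda_1,\ldots,\lambda_N\}$, Lemma~\ref{th:interp_bounds} yields $|Q(\lambda_\alpha)|\leq\prod_{\beta=1}^{N}|\lambda_\beta-\lambda_\alpha|/|\lambda_\beta-z_0|$. The key manipulation, and the main technical step, is to turn this into a bound with $|\lambda_{N+1}-z_0|$ in the denominator and independent of $\alpha$. I would proceed as follows: since $\alpha\geq N+1$ and $E\geq N$, one has $|\lambda_{N+1}-z_0|\leq|\lambda_\alpha-z_0|$, so
\begin{equation*}
\frac{|Q(\lambda_\alpha)|^2}{|\lambda_\alpha-z_0|^{2E+2}}\leq\frac{1}{|\lambda_{N+1}-z_0|^{2E+2}}\left(\frac{|\lambda_{N+1}-z_0|}{|\lambda_\alpha-z_0|}\right)^{2N}\prod_{\beta=1}^{N}\frac{|\lambda_\beta-\lambda_\alpha|^2}{|\lambda_\beta-z_0|^2}\text{.}
\end{equation*}
Then, by the triangular inequality $|\lambda_\beta-\lambda_\alpha|\leq|\lambda_\beta-z_0|+|\lambda_\alpha-z_0|$ followed by $|\lambda_\beta-z_0|/|\lambda_\alpha-z_0|\leq|\lambda_\beta-z_0|/|\lambda_{N+1}-z_0|$, each factor satisfies
\begin{equation*}
\frac{|\lambda_{N+1}-z_0|\,|\lambda_\beta-\lambda_\alpha|}{|\lambda_\alpha-z_0|\,|\lambda_\beta-z_0|}\leq 1+\frac{|\lambda_{N+1}-z_0|}{|\lambda_\beta-z_0|}\text{,}
\end{equation*}
so that all $\alpha$-dependence in the product is absorbed and we get the uniform bound
\begin{equation*}
\frac{|Q(\lambda_\alpha)|^2}{|\lambda_\alpha-z_0|^{2E+2}}\leq\frac{1}{|\lambda_{N+1}-z_0|^{2E+2}}\prod_{\beta=1}^{N}\left(1+\frac{|\lambda_{N+1}-z_0|}{|\lambda_\beta-z_0|}\right)^2\text{.}
\end{equation*}

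Finally, I would sum over $\alpha\geq N+1$, using Parseval's identity $\sum_{\alpha=1}^{\infty}\normV{v^\star_\alpha}^2=\normV{v^\star}^2$, which follows from the $V$-orthogonality of the spectral projections $\{v^\star_\alpha\}_\alpha$ and the resolution of the identity \eqref{eq:spectral_basis}. This immediately yields $\widetilde{j}_E(Q)^2\leq(C')^2/|\lambda_{N+1}-z_0|^{2E+2}$ with $C'$ as in \eqref{eq:fast_optimality_const}, and taking square roots concludes the proof. The main obstacle is the careful bookkeeping in the second step, choosing the right way to distribute powers of $|\lambda_\alpha-z_0|$ so that the exponent $2E+2$ is transferred onto $|\lambda_{N+1}-z_0|$ while the residual $\alpha$-dependence collapses into a finite, explicit constant.
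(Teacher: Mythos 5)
Your proposal is correct and follows essentially the same route as the paper: the same competitor polynomial $g$ with roots $\lambda_1,\ldots,\lambda_N$, the same collapse of the sum to its tail via \eqref{eq:quadratic_simplified_poly}, the upper bound of Lemma~\ref{th:interp_bounds} with the triangular inequality, and Parseval. The only (immaterial) difference is that you distribute the powers of $\abs{\lambda_\alpha-z_0}$ explicitly to get a uniform bound on each term, whereas the paper takes a supremum over $\alpha\geq N+1$ and observes it is attained at $\alpha=N+1$ since $E\geq N$.
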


\begin{proof}
	Let $g$ be the exact denominator of $\mcS$ with degree $N$, i.e. some element of $\PspaceN{N}{z_0}$ with roots $\{\lambda_\alpha\}_{\alpha=1}^N$.

    Thanks to \eqref{eq:quadratic_simplified_poly} and to the optimality of $\den$, see Definition~\ref{def:pade_simplified}, \review{we obtain}
    \begin{align*}
    \widetilde{j}_E\left(\den\right)^2\leq\widetilde{j}_E\left(g\right)^2=&\sum_{\alpha=1}^\infty\frac{\normV{v^\star_\alpha}^2}{\left|\lambda_\alpha-z_0\right|^{2E+2}}\left|g(\lambda_\alpha)\right|^2\\
    =&\sum_{\alpha=N+1}^\infty\frac{\normV{v^\star_\alpha}^2}{\left|\lambda_\alpha-z_0\right|^{2E+2}}\left|g(\lambda_\alpha)\right|^2\text{.}
    \end{align*}
	Now Lemma~\ref{th:interp_bounds} and the triangular inequality can be applied, yielding
    \begin{align*}
    \widetilde{j}_E\left(\den\right)^2\leq&\sum_{\alpha=N+1}^\infty\frac{\normV{v^\star_\alpha}^2}{\left|\lambda_\alpha-z_0\right|^{2E+2}}\prod_{\beta=1}^N\abs{\frac{\lambda_\beta-\lambda_\alpha}{\lambda_\beta-z_0}}^2\\
    \leq&\sum_{\alpha=N+1}^\infty\frac{\normV{v^\star_\alpha}^2}{\left|\lambda_\alpha-z_0\right|^{2E+2}}\prod_{\beta=1}^N\left(1+\abs{\frac{\lambda_\alpha-z_0}{\lambda_\beta-z_0}}\right)^2\\
    \leq&\sup_{\alpha\geq N+1}\left(\frac{1}{\left|\lambda_\alpha-z_0\right|^{E+1}}\prod_{\beta=1}^N\left(1+\abs{\frac{\lambda_\alpha-z_0}{\lambda_\beta-z_0}}\right)\right)^2\sum_{\alpha=N+1}^\infty\normV{v^\star_\alpha}^2\text{.}
    \end{align*}

	Since $E\geq N$, the supremum is achieved for $\alpha=N+1$, leading to 
    \begin{equation*}
    \widetilde{j}_E\left(\den\right)^2\leq\frac{1}{|\lambda_{N+1}-z_0|^{2E+2}}\prod_{\beta=1}^N\left(1+\abs{\frac{\lambda_{N+1}-z_0}{\lambda_\beta-z_0}}\right)^2\sum_{\alpha=N+1}^\infty\normV{v^\star_\alpha}^2\text{.}
    \end{equation*}
    
    The claim follows by exploiting the $V$-orthogonality of the $\{v^\star_\alpha\}_{\alpha=1}^\infty$:
    \begin{equation*}
    \sum_{\alpha=N+1}^\infty\normV{v^\star_\alpha}^2\leq\sum_{\alpha=1}^\infty\normV{v^\star_\alpha}^2=\normV{\sum_{\alpha=1}^\infty v^\star_\alpha}^2=\normV{v^\star}^2\text{.}
    \end{equation*}
\end{proof}

\review{The last technical} result provides a bound for the absolute value of the fast LS-Pad\'e denominator when evaluated at the elements of $\Sigma(\mcL)$ closest to $z_0$.

\begin{lemma}
    \label{th:den_at_pole_convergence}
	Let $N\in\N\setminus\{0\}$ be fixed, and consider the fast LS-Pad\'e denominator $\den$ computed with $E\geq N$ derivatives of $\mcS$ at $z_0\in\C\setminus\Sigma(\mcL)$ (the choice of $M$ is irrelevant, as it does not affect $\smash{\widetilde{j}_E}$). Then, for $\alpha=1,\ldots,N$, \review{the fast Pad\'e denominator satisfies the bound:}
    \begin{equation}\label{eq:den_at_pole_convergence}
    \abs{\den(\lambda_\alpha)}\leq c_\alpha\left|\frac{\lambda_\alpha-z_0}{\lambda_{N+1}-z_0}\right|^{2E}\text{,}
    \end{equation}
    with $c_\alpha$ independent of $E$.
\end{lemma}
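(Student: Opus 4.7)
The plan is to combine the spectral representation \eqref{eq:quadratic_simplified_poly} of the functional with the optimality bound just established in Lemma~\ref{th:fast_optimality}. The point is that \eqref{eq:quadratic_simplified_poly}, applied to $\den$, represents $\widetilde{j}_E(\den)^2$ as a sum of nonnegative terms, one per eigenvalue of $\mcL$, each carrying a weight proportional to $|\lambda_\alpha-z_0|^{-(2E+2)}$. Isolating the $\alpha$-th term therefore gives, for every fixed $\alpha\in\{1,\ldots,N\}$,
$$\frac{\normV{v^\star_\alpha}^2}{|\lambda_\alpha-z_0|^{2E+2}}\,|\den(\lambda_\alpha)|^2\;\leq\;\widetilde{j}_E(\den)^2,$$
a bound which is entirely free of any hypothesis on the location or separation of the roots of $\den$.

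The next step is to inject the upper bound on $\widetilde{j}_E(\den)^2$ provided by Lemma~\ref{th:fast_optimality} into the right-hand side, then solve for $|\den(\lambda_\alpha)|^2$, and finally take square roots. The resulting inequality has the shape claimed in \eqref{eq:den_at_pole_convergence}, with the prefactor $c_\alpha$ built out of the constant $C'$ from \eqref{eq:fast_optimality_const} and of $1/\normV{v^\star_\alpha}$. By the explicit formula \eqref{eq:fast_optimality_const}, $C'$ depends only on $N$, on $\normV{v^\star}$, and on the spectral distances $\{|\lambda_\beta-z_0|\}_{\beta=1}^{N+1}$; the factor $\normV{v^\star_\alpha}$ is strictly positive thanks to the standing assumption made at the beginning of Section~\ref{sec:conv_denominator_fast} that $P_\lambda v^\star\neq 0$ for every $\lambda\in\Sigma(\mcL)$. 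Consequently $c_\alpha$ is finite and, crucially, independent of $E$, which is the only quantitative point that actually needs to be verified in the statement.

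There is no substantive technical obstacle here: the argument is essentially a one-line unpacking of optimality. The fast LS-Pad\'e functional $\widetilde{j}_E$ penalizes each eigenmode $\lambda_\alpha$ with a weight that grows exponentially in $E$ as $\alpha$ decreases (i.e.\ as $\lambda_\alpha$ moves closer to $z_0$), so the smallness of $\widetilde{j}_E(\den)$ already proved in Lemma~\ref{th:fast_optimality} transfers mechanically, and individually, into smallness of $|\den(\lambda_\alpha)|$ at each of the $N$ eigenvalues closest to $z_0$. The only bookkeeping item worth double-checking is that no hidden $E$-dependence creeps into $c_\alpha$; this is transparent from \eqref{eq:fast_optimality_const}, since all the ingredients of $C'$ are geometric quantities attached to the spectrum of $\mcL$ and to the datum $v^\star$.
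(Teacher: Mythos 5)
Your argument is correct as far as it goes, but it does not prove the stated bound: it falls short of the claimed rate by a factor of roughly two in the exponent. Isolating the $\alpha$-th term of \eqref{eq:quadratic_simplified_poly} and inserting Lemma~\ref{th:fast_optimality} gives
\begin{equation*}
\frac{\normV{v^\star_\alpha}^2}{\abs{\lambda_\alpha-z_0}^{2E+2}}\abs{\den(\lambda_\alpha)}^2\leq\widetilde{j}_E(\den)^2\leq\frac{C'^2}{\abs{\lambda_{N+1}-z_0}^{2E+2}}\text{,}
\end{equation*}
and after taking square roots this yields $\abs{\den(\lambda_\alpha)}\leq\bigl(C'/\normV{v^\star_\alpha}\bigr)\,\abs{(\lambda_\alpha-z_0)/(\lambda_{N+1}-z_0)}^{E+1}$. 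Since the ratio $r=\abs{\lambda_\alpha-z_0}/\abs{\lambda_{N+1}-z_0}$ satisfies $r\leq 1$ (and generically $r<1$), the quantity $r^{E+1}$ is \emph{larger} than the claimed $r^{2E}$ for $E>1$, so your inequality is strictly weaker than \eqref{eq:den_at_pole_convergence} and does not imply it. This weaker estimate is in fact exactly the one the paper derives and uses later, in \eqref{eq:den_eval_bound} inside the proof of Theorem~\ref{th:pole_convergence_N}; the whole point of Lemma~\ref{th:den_at_pole_convergence} is to double that rate.

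The missing idea is a perturbation argument at the level of the Gramian matrix rather than of the scalar functional. The paper writes $\Gtilde=\sum_{\alpha\geq1}\normV{v^\star_\alpha}^2\abs{\lambda_\alpha-z_0}^{-2E-2}\bm{\omega}_\alpha^{}\bm{\omega}_\alpha^*$, splits off the rank-$N$ part $\Ghat=W\Lambda_EW^*$ built from the first $N$ eigenvalues, and shows that both the minimal eigenvalue $\sigma$ of $\Gtilde$ (whose eigenvector is the coefficient vector $\qq_E$ of $\den$) and the tail perturbation $\norm{\Ghat-\Gtilde}{2}$ are bounded by a constant times $\abs{\lambda_{N+1}-z_0}^{-2E-2}$. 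Applying a left inverse of the Vandermonde-type matrix $W$ to the identity $W\Lambda_EW^*\qq_E=(\Ghat-\Gtilde)\qq_E+\sigma\qq_E$ isolates, row by row, the quantity $\normV{v^\star_\alpha}^2\abs{\lambda_\alpha-z_0}^{-2E-2}\,\bm{\omega}_\alpha^*\qq_E=\normV{v^\star_\alpha}^2\abs{\lambda_\alpha-z_0}^{-2E-2}\,\den(\lambda_\alpha)$ \emph{without} taking a square root, so the full weight $\abs{\lambda_\alpha-z_0}^{2E+2}/\abs{\lambda_{N+1}-z_0}^{2E+2}$ survives and yields the rate $2E+2\geq 2E$. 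You would need to incorporate this linear-algebraic step (or an equivalent one) to reach the claimed exponent.
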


\begin{proof}
	Let $E\geq N$ be fixed, and consider the vector $\qq_E\in\C^{N+1}$, with $\norm{\qq_E}{2}=1$, such that $\qq_E=\smash{\big(\coeff{\den}{N-j,z_0}\big)_{j=0}^{N}}$. For each $\alpha=1,2,\ldots$, let $\bm{\omega}_\alpha\in\C^{N+1}$ be defined as
	\begin{equation*}
	\bm{\omega}_\alpha=\Big[\left(\lambda_\alpha-z_0\right)^{N},\ldots,\lambda_\alpha-z_0,1\Big]^*\text{,}
	\end{equation*}
	so that $\den(\lambda_\alpha)=\bm{\omega}_\alpha^*\qq_E$.
	
	Moreover, consider the Hermitian matrices $\Gtilde,\Ghat\in\C^{(N+1)\times(N+1)}$ defined as
	\begin{equation*}
	\Gtilde=\sum_{\alpha=1}^\infty\frac{\normV{v^\star_\alpha}^2}{\abs{\lambda_\alpha-z_0}^{2E+2}}\bm{\omega}_\alpha^{}\bm{\omega}_\alpha^*\text{,}
	\end{equation*}
	and
	\begin{equation*}
	\Ghat=\sum_{\alpha=1}^{N}\frac{\normV{v^\star_\alpha}^2}{\abs{\lambda_\alpha-z_0}^{2E+2}}\bm{\omega}_\alpha^{}\bm{\omega}_\alpha^*\text{.}
	\end{equation*}
	In particular, we remark that $\Gtilde$ is positive definite, due to the linear independence of $\{\bm{\omega}_\alpha\}_{\alpha=1}^\infty$, which, in turn, follows from the fact that the $\{\lambda_\alpha\}_{\alpha=1}^\infty$ are distinct:
	\begin{equation*}
	\qq^*\Gtilde\qq=\sum_{\alpha=1}^\infty\frac{\normV{v^\star_\alpha}^2}{\abs{\lambda_\alpha-z_0}^{2E+2}}\abs{\bm{\omega}_\alpha^*\qq}>0\quad\text{for all }\qq\in\C^{N+1}\setminus\{\bm{0}\}\text{.}
	\end{equation*}
	Due to \eqref{eq:simple_gram} and \eqref{eq:quadratic_simplified_poly}, $\qq_E$ is an eigenvector of $\Gtilde$, corresponding to the minimal eigenvalue, which we denote by $\sigma$, and for which, by employing Lemma~\ref{th:fast_optimality}, we proceed to find an upper bound:
	\begin{equation}\label{eq:den_bound_sigma}
	\sigma=\qq_E^*\Gtilde\qq_E=\widetilde{j}_E\left(\den\right)^2\leq\frac{C'^2}{\abs{\lambda_{N+1}-z_0}^{2E+2}}\text{,}
	\end{equation}
	with $C'$ independent of $E$.
	
	As a preliminary step, we prove a bound for the perturbation $\norm{\Ghat-\Gtilde}{2}$ using the Cauchy-Schwarz inequality:
	\begin{align*}
	\norm{\Ghat-\Gtilde}{2}&=\max_{\qq\in\C^{N+1},\|\qq\|=1}\qq^*\left(\Ghat-\Gtilde\right)\qq\nonumber\\
	&=\max_{\qq\in\C^{N+1},\|\qq\|=1}\sum_{\alpha=N+1}^\infty\frac{\normV{v^\star_\alpha}^2}{\left|\lambda_\alpha-z_0\right|^{2E+2}}\left|\bm{\omega}_\alpha^*\qq\right|^2\\
	&\leq\sum_{\alpha=N+1}^\infty\frac{\normV{v^\star_\alpha}^2}{\left|\lambda_\alpha-z_0\right|^{2E+2}}\norm{\bm{\omega}_\alpha}{2}^2\\
	&=\sum_{\alpha=N+1}^\infty\frac{\normV{v^\star_\alpha}^2}{\left|\lambda_\alpha-z_0\right|^{2E+2}}\sum_{j=0}^{N}\left|\lambda_\alpha-z_0\right|^{2j}\\
	&\leq\sup_{\alpha\geq N+1}\left(\frac{1}{\left|\lambda_\alpha-z_0\right|^2}\sum_{j=0}^{N}\left|\lambda_\alpha-z_0\right|^{2j-2E}\right)\sum_{\alpha=N+1}^\infty\normV{v^\star_\alpha}^2\text{.}
	\end{align*}
	Since $E\geq N$, the supremum is achieved for $\alpha=N+1$. This yields
	\begin{equation}\label{eq:den_bound_norm}
	\norm{\Ghat-\Gtilde}{2}\leq\frac{1}{\left|\lambda_{N+1}-z_0\right|^{2E+2}}\sum_{j=0}^{N}\left|\lambda_{N+1}-z_0\right|^{2j}\sum_{\alpha=N+1}^\infty \normV{v^\star_\alpha}^2=\frac{C''^2}{\left|\lambda_{N+1}-z_0\right|^{2E+2}}\text{,}
	\end{equation}
	with $C''$ independent of $E$. 
	
	Now, let
	\begin{equation*}
	W=\Big[\bm{\omega}_1\big|\cdots\big|\bm{\omega}_N\Big]\in\C^{(N+1)\times N}
	\end{equation*}
	and
	\begin{equation*}
	\Lambda_E=\text{diag}\left(\frac{\normV{v^\star_1}^2}{\abs{\lambda_1-z_0}^{2E+2}},\ldots,\frac{\normV{v^\star_N}^2}{\abs{\lambda_N-z_0}^{2E+2}}\right)\in\C^{N\times N}\text{,}
	\end{equation*}
	so that $\Ghat=W\Lambda_EW^*$. In particular, $W$ is a rank-$N$ matrix, due to the fact that the $\{\lambda_\alpha\}_{\alpha=1}^\infty$ are distinct. As such, it admits a left inverse, i.e. a matrix $W^\dagger\in\C^{N\times(N+1)}$ such that $W^\dagger W=I_{N}$, whose rows we denote by
	\begin{equation*}
	W^\dagger=\Big[\bm{w}_1^\dagger\big|\cdots\big|\bm{w}_{N}^\dagger\Big]^*\text{.}
	\end{equation*}
	
	Now, since $\Gtilde\qq_E=\sigma\qq_E$, \review{we obtain}
	\begin{equation*}
	W\Lambda_EW^*\qq_E=\Ghat\qq_E=\left(\Ghat-\Gtilde\right)\qq_E+\sigma\qq_E\text{.}
	\end{equation*}
	Applying $W^\dagger$ from the left leads to
	\begin{equation*}
	\Lambda_EW^*\qq_E=W^\dagger\left(\Ghat-\Gtilde\right)\qq_E+\sigma W^\dagger\qq_E\text{,}
	\end{equation*}
	i.e., element-wise,
	\begin{equation*}
	\frac{\normV{v^\star_\alpha}^2}{\abs{\lambda_\alpha-z_0}^{2E+2}}\bm{\omega}_\alpha^*\qq_E=\left.\bm{w}_\alpha^\dagger\right.^*\left(\Ghat-\Gtilde\right)\qq_E+\sigma\left.\bm{w}_\alpha^\dagger\right.^*\qq_E\quad\text{for }\alpha=1,\ldots,N\text{.}
	\end{equation*}
	
	Thus, the triangular and Cauchy-Schwarz inequalities, and the normalization of $\qq_E$ lead to
	\begin{align*}
	\abs{\den\left(\lambda_\alpha\right)}=\abs{\bm{\omega}_\alpha^*\qq_E}\leq&\frac{\abs{\lambda_\alpha-z_0}^{2E+2}}{\normV{v^\star_\alpha}^2}\left(\abs{\left.\bm{w}_\alpha^\dagger\right.^*\left(\Ghat-\Gtilde\right)\qq_E}+\sigma\abs{\left.\bm{w}_\alpha^\dagger\right.^*\qq_E}\right)\\
	\leq&\frac{\abs{\lambda_\alpha-z_0}^{2E+2}}{\normV{v^\star_\alpha}^2}\norm{\bm{w}_\alpha^\dagger}{2}\left(\norm{\Ghat-\Gtilde}{2}+\sigma\right)\text{,}
	\end{align*}
	for $\alpha=1,\ldots,N$. The claim follows by exploiting \eqref{eq:den_bound_sigma} and \eqref{eq:den_bound_norm}.
\end{proof}

\review{We are now ready to provide our main result on convergence of fast LS-Pad\'e approximant poles to the $N$ closest poles of $\mcS$.}

\begin{theorem}
    \label{th:pole_convergence}
	Consider the framework of Lemma~\ref{th:den_at_pole_convergence}, and, for fixed $E$, denote the roots of $\den$ by $\{\widetilde{\lambda}_\beta^{(E)}\}_{\beta=1}^{N}$. Then, for $\alpha=1,\ldots,N$, \review{we have that}
    \begin{equation}\label{eq:pole_convergence}
    \min_{\beta=1,\ldots,N}\left|\widetilde{\lambda}_\beta^{(E)}-\lambda_\alpha\right|\leq c'_\alpha\left|\frac{\lambda_\alpha-z_0}{\lambda_{N+1}-z_0}\right|^{2E}\quad\text{for }E\text{ large enough,}
    \end{equation}
    with $c'_\alpha$ independent of $E$.
\end{theorem}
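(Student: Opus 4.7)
The natural strategy is to combine the upper bound on $|\den(\lambda_\alpha)|$ furnished by Lemma~\ref{th:den_at_pole_convergence} with the lower bound on $|\den(\lambda_\alpha)|$ furnished by inequality \eqref{eq:interp_bound1} of Lemma~\ref{th:interp_bounds}, applied with the roots $\{\widetilde{\lambda}_\beta^{(E)}\}_{\beta=1}^N$. Doing so yields, for each $\alpha\in\{1,\ldots,N\}$,
\begin{equation*}
\prod_{\beta=1}^N\frac{\big|\widetilde{\lambda}_\beta^{(E)}-\lambda_\alpha\big|}{1+\big|\widetilde{\lambda}_\beta^{(E)}-z_0\big|}\leq c_\alpha\left|\frac{\lambda_\alpha-z_0}{\lambda_{N+1}-z_0}\right|^{2E}\text{.}
\end{equation*}
To extract from this inequality a bound on the single quantity $\min_\beta\big|\widetilde{\lambda}_\beta^{(E)}-\lambda_\alpha\big|$, I need to show that, for $E$ large, the remaining $N-1$ factors in the product are uniformly bounded below by a positive constant.

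The plan to establish this is to show that the multiset of roots of $\den$ converges (in the Hausdorff sense) to $\{\lambda_\alpha\}_{\alpha=1}^N$ as $E\to\infty$. First I would note that the coefficient vector $\qq_E$ of $\den$ lies on the unit sphere of $\C^{N+1}$, hence any subsequence has a sub-subsequence converging to some $\qq^\infty$ with $\|\qq^\infty\|_2=1$, which is the coefficient vector of a nonzero polynomial $Q^\infty$ of degree at most $N$. Since point evaluation is continuous on the coefficient space and Lemma~\ref{th:den_at_pole_convergence} gives $\abs{\den(\lambda_\alpha)}\to 0$ for $\alpha=1,\ldots,N$, the limit $Q^\infty$ vanishes at the $N$ distinct points $\lambda_1,\ldots,\lambda_N$; being nonzero of degree $\leq N$, it must then have degree exactly $N$ with roots precisely $\{\lambda_\alpha\}_{\alpha=1}^N$. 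Continuity of the roots on the coefficients (the leading coefficient in the limit being nonzero) then yields convergence of the roots up to permutation along the sub-subsequence. A standard subsequence argument upgrades this to convergence as $E\to\infty$.

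With that in hand, for $E$ large enough there exists a permutation $\sigma_E$ of $\{1,\ldots,N\}$ such that $\widetilde{\lambda}_{\sigma_E(\alpha)}^{(E)}\to\lambda_\alpha$ for every $\alpha$. For $\beta\neq\sigma_E(\alpha)$, the root $\widetilde{\lambda}_\beta^{(E)}$ stays within, say, a distance $\tfrac12\min_{\alpha'\neq\alpha}|\lambda_\alpha-\lambda_{\alpha'}|$ of some $\lambda_{\alpha'}\neq\lambda_\alpha$, so that the factor $\frac{|\widetilde{\lambda}_\beta^{(E)}-\lambda_\alpha|}{1+|\widetilde{\lambda}_\beta^{(E)}-z_0|}$ is bounded below by a positive constant $\gamma_\alpha$ depending only on the spectrum. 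Dividing through the displayed inequality by $\gamma_\alpha^{N-1}$ isolates the factor corresponding to $\beta=\sigma_E(\alpha)$, and since its denominator $1+|\widetilde{\lambda}_{\sigma_E(\alpha)}^{(E)}-z_0|$ is also bounded above as $\widetilde{\lambda}_{\sigma_E(\alpha)}^{(E)}\to\lambda_\alpha$, rearranging produces the stated bound with $c'_\alpha$ expressible explicitly in terms of $c_\alpha$, $|\lambda_\alpha-z_0|$, and the separation constants.

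The delicate step is the convergence of the roots: I need the limit polynomial to have full degree $N$, which relies on the fact that the $N$ interpolation conditions $Q^\infty(\lambda_\alpha)=0$ are independent and uniquely determine $Q^\infty$ up to a scalar multiple (together with the normalization). Everything else is continuity and bookkeeping. The requirement ``$E$ large enough'' in \eqref{eq:pole_convergence} is also unavoidable, since the lower-bound argument on the spurious factors $f_E(\beta,\alpha)$ with $\beta\neq\sigma_E(\alpha)$ only starts to hold once the roots have settled sufficiently close to their limits.
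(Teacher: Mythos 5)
Your proposal is correct, and its overall skeleton coincides with the paper's: combine the upper bound of Lemma~\ref{th:den_at_pole_convergence} with the lower bound \eqref{eq:interp_bound1}, first establish that the roots of $\den$ converge to $\{\lambda_\alpha\}_{\alpha=1}^N$, and then, once the roots have separated into disjoint neighborhoods of the true poles, isolate the single factor associated with $\lambda_\alpha$ while bounding the remaining $N-1$ factors below by a constant. Where you genuinely diverge is in how the preliminary qualitative convergence of the roots is obtained. The paper stays entirely inside the product inequality: it introduces the increasing function $\phi_\alpha(x)=x/(1+\abs{\lambda_\alpha-z_0}+x)$, bounds the whole product below by $\bigl(\phi_\alpha(\min_\beta\abs{\smash{\widetilde{\lambda}_\beta^{(E)}}-\lambda_\alpha})\bigr)^N$, and inverts to get a (rate-suboptimal, exponent $2E/N$) quantitative bound that already implies convergence. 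You instead use compactness of the coefficient sphere of $\PspaceN{N}{z_0}$: every subsequence of the coefficient vectors has a limit, the limit polynomial is nonzero of degree at most $N$ and vanishes at the $N$ distinct points $\lambda_1,\ldots,\lambda_N$ (by Lemma~\ref{th:den_at_pole_convergence} and continuity of point evaluation), hence has exactly those roots, and root continuity plus a subsequence argument concludes. Both are sound; the paper's route is quantitative at every stage and avoids invoking root continuity, while yours is conceptually cleaner and in fact reproves Corollary~\ref{th:den_convergence} along the way. One small point to tighten: your claim that $\abs{\den(\lambda_\alpha)}\to 0$ for all $\alpha\le N$ uses $\abs{\lambda_\alpha-z_0}<\abs{\lambda_{N+1}-z_0}$, which can fail if several poles are equidistant from $z_0$; the paper dispatches this degenerate case separately (the bound \eqref{eq:pole_convergence} is then only a constant), and you should do the same before running the compactness argument. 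Also, the symbol $f_E(\beta,\alpha)$ in your last paragraph is never defined, though the intended meaning is clear.
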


\begin{proof}
	Throughout the proof we assume that $\alpha\in\{1,\ldots,N\}$ is fixed. Also, for fixed $E$, let 
	\begin{equation*}
	\widetilde{\lambda}^{(E)}:\{\lambda_\gamma\}_{\gamma=1}^\infty\to\{\widetilde{\lambda}_\beta^{(E)}\}_{\beta=1}^{N}
	\end{equation*}		
	be the function mapping each pole of $\mcS$ to the closest root of the Pad\'e denominator (in case of ambiguity, any of the closest roots suffices), i.e.
	\begin{equation*}
	\abs{\widetilde{\lambda}^{(E)}(\lambda_\gamma)-\lambda_\gamma}=\min_{\beta=1,\ldots,N}\abs{\widetilde{\lambda}_\beta^{(E)}-\lambda_\gamma}\quad\text{for }\gamma=1,2,\ldots\text{.}
	\end{equation*}
	
	Since $\den$ is normalized, Lemma~\ref{th:interp_bounds} applies, yielding
	\begin{equation}
	\abs{\den(\lambda_\alpha)}\geq\prod_{\beta=1}^{N}\frac{\abs{\widetilde{\lambda}_\beta^{(E)}-\lambda_\alpha}}{1+\abs{\widetilde{\lambda}_\beta^{(E)}-z_0}}\geq\prod_{\beta=1}^{N}\frac{\abs{\widetilde{\lambda}_\beta^{(E)}-\lambda_\alpha}}{1+\abs{\lambda_\alpha-z_0}+\abs{\widetilde{\lambda}_\beta^{(E)}-\lambda_\alpha}}\text{,}\label{eq:den_eval_bound_partial}
	\end{equation}
	thanks to the triangular inequality.
	
	We introduce the strictly increasing continuous function
	\begin{equation}\label{eq:def_phi_alpha}
	\phi_\alpha(x)=\frac{x}{1+\abs{\lambda_\alpha-z_0}+x}\text{,}
	\end{equation}
	defined over the positive real numbers, with $\phi_\alpha(0)=0$ and whose inverse is
	\begin{equation*}	
	\phi_\alpha^{-1}(y)=\left(1+\abs{\lambda_\alpha-z_0}\right)\frac{y}{1-y}
	\end{equation*}
	for $0\leq y<1$.
	
	Now, \eqref{eq:den_eval_bound_partial} and the monotonicity of $\phi_\alpha$ lead to
	\begin{equation*}
	\abs{\den(\lambda_\alpha)}\geq\prod_{\beta=1}^{N}\phi_\alpha\left(\abs{\widetilde{\lambda}_\beta^{(E)}-\lambda_\alpha}\right)\geq\left(\phi_\alpha\left(\abs{\widetilde{\lambda}^{(E)}\left(\lambda_\alpha\right)-\lambda_\alpha}\right)\right)^{N}\text{,}
	\end{equation*}
	so that, \review{thanks to Lemma~\ref{th:den_at_pole_convergence},}
	\begin{equation*}
	\abs{\widetilde{\lambda}^{(E)}\left(\lambda_\alpha\right)-\lambda_\alpha}\leq\phi_\alpha^{-1}\left(\left(c_\alpha\abs{\frac{\lambda_\alpha-z_0}{\lambda_{N+1}-z_0}}^{2E}\right)^{1/N}\right)\text{,}
	\end{equation*}
	provided the argument of $\phi_\alpha^{-1}$ is smaller than 1, i.e. for $E$ large enough. If $\abs{\lambda_\alpha-z_0}=\abs{\lambda_{N+1}-z_0}$, the claim follows trivially by defining $c'_\alpha=\phi_\alpha^{-1}\smash{\big(c_\alpha^{1/N}\big)}$. Thus, for the rest of the proof we assume that $\abs{\lambda_\alpha-z_0}<\abs{\lambda_{N+1}-z_0}$.
	
	Since $c_\alpha$ is independent of $E$, the continuity of $\phi_\alpha^{-1}$ yields
	\begin{equation*}
	\lim_{E\to\infty}\abs{\widetilde{\lambda}^{(E)}\left(\lambda_\alpha\right)-\lambda_\alpha}\leq\phi_\alpha^{-1}\left(\lim_{E\to\infty}\left(c_\alpha\abs{\frac{\lambda_\alpha-z_0}{\lambda_{N+1}-z_0}}^{2E}\right)^{1/N}\right)=0\text{,}
	\end{equation*}
	i.e.
	\begin{equation}\label{eq:den_eval_conv_no_rate}
	\lim_{E\to\infty}\abs{\widetilde{\lambda}^{(E)}\left(\lambda_\alpha\right)-\lambda_\alpha}=0\text{.}
	\end{equation}

	In order to obtain the rate \eqref{eq:pole_convergence}, we define
	\begin{equation*}	
	r=\min_{1\leq\beta<\beta'\leq N}\abs{\lambda_\beta-\lambda_{\beta'}}>0\text{.}
	\end{equation*}
	For $E$ large enough, \eqref{eq:den_eval_conv_no_rate} implies that
	\begin{equation}\label{eq:den_eval_closest_map}
	\abs{\widetilde{\lambda}^{(E)}(\lambda_\gamma)-\lambda_\gamma}<\frac{r}{2}\quad\text{for }\gamma=1,\ldots,N\text{.}
	\end{equation}
	In particular, the approximate poles $\{\widetilde{\lambda}^{(E)}(\lambda_\gamma)\}_{\gamma=1}^{N}$ form a subset of
	\begin{equation*}	
	B=\bigcup_{\gamma=1,\ldots,N}\mathcal{B}\left(\lambda_\gamma,\frac{r}{2}\right)\text{.}
	\end{equation*}
	But $B$ has $N$ disjoint connected components. Thus, thanks to \eqref{eq:den_eval_closest_map}, the map $\smash{\widetilde{\lambda}^{(E)}}$ is injective over $\{\lambda_\gamma\}_{\gamma=1}^{N}$, and \review{we can write}
	\begin{equation*}
	\begin{cases}
	\abs{\widetilde{\lambda}_\beta^{(E)}-\lambda_\alpha}<\frac{r}{2},\quad&\text{if }\widetilde{\lambda}_\beta^{(E)}=\widetilde{\lambda}^{(E)}(\lambda_\alpha),\\[.2cm]
	\abs{\widetilde{\lambda}_\beta^{(E)}-\lambda_\alpha}\geq\frac{r}{2},\quad&\text{for all other }\beta=1,\ldots,N\text{.}
	\end{cases}
	\end{equation*}

	From \eqref{eq:den_eval_bound_partial} it follows that
	\begin{align*}
	\abs{\den(\lambda_\alpha)}&\geq\prod_{\beta=1}^{N}\phi_\alpha\left(\abs{\widetilde{\lambda}_\beta^{(E)}-\lambda_\alpha}\right)\\
	&=\phi_\alpha\left(\abs{\widetilde{\lambda}^{(E)}(\lambda_\alpha)-\lambda_\alpha}\right)\prod_{\substack{\beta=1\\\widetilde{\lambda}_\beta^{(E)}\neq\widetilde{\lambda}^{(E)}(\lambda_\alpha)}}^{N}\phi_\alpha\left(\abs{\widetilde{\lambda}_\beta^{(E)}-\lambda_\alpha}\right)\\
	&\geq\left(\phi_\alpha\left(\frac{r}{2}\right)\right)^{N-1}\phi_\alpha\left(\abs{\widetilde{\lambda}^{(E)}(\lambda_\alpha)-\lambda_\alpha}\right)\text{,}
	\end{align*}
	provided $E$ is large enough.
	
	By Lemma~\ref{th:den_at_pole_convergence} and by applying $\phi_\alpha^{-1}$, it follows that
	\begin{align*}
	\abs{\widetilde{\lambda}^{(E)}(\lambda_\alpha)-\lambda_\alpha}\leq&\,\phi_\alpha^{-1}\left(\left(\phi_\alpha\left(\frac{r}{2}\right)\right)^{1-N}c_\alpha\abs{\frac{\lambda_\alpha-z_0}{\lambda_{N+1}-z_0}}^{2E}\right)\\
	=&\,\frac{\left(1+\abs{\lambda_\alpha-z_0}\right)\left(\phi_\alpha\left(\frac{r}{2}\right)\right)^{1-N}c_\alpha}{1-\left(\phi_\alpha\left(\frac{r}{2}\right)\right)^{1-N}c_\alpha\abs{\frac{\lambda_\alpha-z_0}{\lambda_{N+1}-z_0}}^{2E}}\,\abs{\frac{\lambda_\alpha-z_0}{\lambda_{N+1}-z_0}}^{2E}
	\end{align*}
	for $E$ large enough.
	
	For $E$ large enough, $$\left(\phi_\alpha\left(\frac{r}{2}\right)\right)^{1-N}c_\alpha\abs{\frac{\lambda_\alpha-z_0}{\lambda_{N+1}-z_0}}^{2E}<\frac{1}{2}\text{,}$$ so that
	\begin{equation*}	
	\abs{\widetilde{\lambda}^{(E)}(\lambda_\alpha)-\lambda_\alpha}\leq 2\left(1+\abs{\lambda_\alpha-z_0}\right)\left(\phi_\alpha\left(\frac{r}{2}\right)\right)^{1-N}c_\alpha\abs{\frac{\lambda_\alpha-z_0}{\lambda_{N+1}-z_0}}^{2E}\text{,}
	\end{equation*}
	and the claim \eqref{eq:pole_convergence} follows.
\end{proof}

\begin{corollary}
    \label{th:den_convergence}
	Consider the framework of Lemma~\ref{th:den_at_pole_convergence}, and let $g\in\PspaceN{N}{z_0}$ have roots $\{\lambda_\alpha\}_{\alpha=1}^N$. As $E$ increases, the complex magnitude of the Pad\'e denominator $\abs{\den}$ converges to $\abs{g}$, uniformly over all compact subsets of $\C$.
\end{corollary}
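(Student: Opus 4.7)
The approach is to combine the pole-convergence estimate of Theorem~\ref{th:pole_convergence} with the continuous dependence observation recorded in Remark~\ref{re:interp_continuity}. Concretely, the plan is: first, identify the roots of $\den$ with the approximate poles $\{\widetilde{\lambda}_\beta^{(E)}\}_{\beta=1}^N$ converging to $\{\lambda_\alpha\}_{\alpha=1}^N$; then, observe that the map from (normalized polynomial) roots to $|Q(z)|$ is jointly continuous in the roots and in $z$; finally, lift pointwise convergence to uniform convergence on compact subsets of $\C$ via compactness.

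First, for $E$ sufficiently large, Theorem~\ref{th:pole_convergence} (together with the injectivity argument established inside its proof, which relies on the separation $r = \min_{\beta\neq\beta'}|\lambda_\beta-\lambda_{\beta'}|$) ensures that one can label the roots $\{\widetilde{\lambda}_\beta^{(E)}\}_{\beta=1}^N$ of $\den$ so that $\widetilde{\lambda}_\beta^{(E)}\to\lambda_\beta$ as $E\to\infty$. In particular, there exists a bounded region $\mcR\subset\C^N$ containing both the limit tuple $(\lambda_1,\ldots,\lambda_N)$ and all tuples $(\widetilde{\lambda}_1^{(E)},\ldots,\widetilde{\lambda}_N^{(E)})$ for $E$ large enough.

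Next, from \eqref{eq:exact_den_interp}--\eqref{eq:exact_den_scaling} in the proof of Lemma~\ref{th:interp_bounds}, a normalized polynomial $Q\in\PspaceN{N}{z_0}$ with roots $(z_1,\ldots,z_N)$ satisfies
\begin{equation*}
|Q(z)| = \tau(z_1,\ldots,z_N)\,\prod_{\alpha=1}^N |z_\alpha - z|,
\end{equation*}
where the scaling factor $\tau$ is determined by the integral in \eqref{eq:exact_den_scaling} and thus depends continuously on the roots. Hence the map $(z_1,\ldots,z_N,z)\mapsto |Q(z)|$ is jointly continuous on $\C^{N+1}$, as highlighted in Remark~\ref{re:interp_continuity}. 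Applying this both to $\den$ (with roots $\widetilde{\lambda}_\beta^{(E)}$) and to $g$ (with roots $\lambda_\beta$), pointwise convergence $|\den(z)|\to|g(z)|$ for each $z\in\C$ follows immediately from the root convergence above.

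Finally, let $K\subset\C$ be compact. Then the map $(z_1,\ldots,z_N,z)\mapsto |Q(z)|$ is uniformly continuous on the compact set $\overline{\mcR}\times K$. Since the tuples of roots of $\den$ converge in $\C^N$ to the tuple of roots of $g$, the uniform continuity yields
\begin{equation*}
\sup_{z\in K} \bigl||\den(z)| - |g(z)|\bigr| \longrightarrow 0 \quad\text{as }E\to\infty,
\end{equation*}
which is the claim. The only mildly subtle point is the injective matching of roots, but this is already handled internally in the proof of Theorem~\ref{th:pole_convergence}; once that matching is in place, the remainder of the argument is a pure continuity/compactness exercise.
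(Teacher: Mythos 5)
Your proposal is correct and follows essentially the same route as the paper: invoke Theorem~\ref{th:pole_convergence} for convergence of the roots of $\den$ to those of $g$, then apply the continuity of $\abs{Q(z)}$ with respect to the roots from Remark~\ref{re:interp_continuity}. You simply spell out in more detail the injective matching of roots and the uniform-continuity-on-compacts step that the paper leaves implicit.
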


\begin{proof}
	Theorem~\ref{th:pole_convergence} shows that the roots of $\den$, namely $\{\widetilde{\lambda}_\beta^{(E)}\}_{\beta=1}^N$, converge to those of $g$ as $E$ increases. Due to Remark~\ref{re:interp_continuity}, the absolute value of a polynomial in $\PspaceN{N}{z_0}$ depends continuously on its roots, and the claim follows.
\end{proof}

All the results above hold for increasing $E$ with constant denominator degree $N$. A convergence result can be proven also in the case of increasing $N$, as follows.

\begin{theorem}
    \label{th:pole_convergence_N}
	Consider a sequence
	\begin{equation*}	
	\left(E_k,N_k\right)_{k=1}^\infty\subset\{\left(E,N\right)\in\N^2, E\geq N\}\text{,}
	\end{equation*}
	such that $E_{k+1}> E_k$ and $N_{k+1}\geq N_k$ for all $k$.	Let $\denk$ be the fast LS-Pad\'e denominator computed with $E_k$ derivatives of $\mcS$ at $z_0\in\C\setminus\Sigma(\mcL)$, whose roots are denoted by $\{\smash{\widetilde{\lambda}_\beta^{(k)}}\}_{\beta=1}^{N_k}$ (the choice of $M_k$ is irrelevant, as it does not affect $\smash{\widetilde{j}_{E_k}}$). If $\lim_{k\to\infty}N_k=\infty$, then, \review{for all $\alpha=1,2,\ldots$},
    \begin{equation}
    \lim_{k\to\infty}\min_{\beta=1,\ldots,N_k}\abs{\widetilde{\lambda}_\beta^{(k)}-\lambda_\alpha}=0\text{.}
    \end{equation}
\end{theorem}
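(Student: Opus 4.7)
The plan is to adapt the strategy of Theorem~\ref{th:pole_convergence} to the setting of a varying denominator degree. As there, I would first derive an upper bound for $|\denk(\lambda_\alpha)|$ by exploiting the optimality of $\denk$, and then extract control of $\min_{\beta=1,\ldots,N_k}|\widetilde{\lambda}_\beta^{(k)}-\lambda_\alpha|$ via the lower bound in Lemma~\ref{th:interp_bounds}. The essential new difficulty is that, since $N_k$ now grows to infinity, the estimate of Lemma~\ref{th:den_at_pole_convergence} cannot be used as a black box (its constant $c_\alpha$ depends on $N$ and blows up); instead, one must show that $|\denk(\lambda_\alpha)|$ decays faster than any geometric sequence in $N_k$, so that the factor $\phi_\alpha(\,\cdot\,)^{N_k}$ appearing in the lower bound does not obstruct the conclusion.

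For the upper bound, isolating the $\alpha$-th term in the spectral expansion \eqref{eq:quadratic_simplified_poly} yields $\normV{v^\star_\alpha}^2\,|\denk(\lambda_\alpha)|^2/|\lambda_\alpha-z_0|^{2E_k+2}\leq\widetilde{j}_{E_k}(\denk)^2$, and combining this with Lemma~\ref{th:fast_optimality} (applied with $N=N_k$) gives
\begin{equation*}
|\denk(\lambda_\alpha)| \leq \frac{\normV{v^\star}}{\normV{v^\star_\alpha}}\prod_{\beta=1}^{N_k}\left(1+\frac{|\lambda_{N_k+1}-z_0|}{|\lambda_\beta-z_0|}\right)\left|\frac{\lambda_\alpha-z_0}{\lambda_{N_k+1}-z_0}\right|^{E_k+1}.
\end{equation*}
Using $E_k\geq N_k$ and $|\lambda_\alpha-z_0|<|\lambda_{N_k+1}-z_0|$ (valid for $k$ large, so that $\alpha\leq N_k$), I would redistribute $N_k$ copies of $|\lambda_\alpha-z_0|/|\lambda_{N_k+1}-z_0|$ into the product to obtain a bound of the form
\begin{equation*}
|\denk(\lambda_\alpha)| \leq \frac{\normV{v^\star}\,|\lambda_\alpha-z_0|}{\normV{v^\star_\alpha}\,|\lambda_{N_k+1}-z_0|}\prod_{\beta=1}^{N_k}\left(\frac{|\lambda_\alpha-z_0|}{|\lambda_{N_k+1}-z_0|}+\frac{|\lambda_\alpha-z_0|}{|\lambda_\beta-z_0|}\right).
\end{equation*}

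The main obstacle is then to show that the right-hand side decays super-geometrically in $N_k$. Because the only accumulation point of $\Sigma(\mcL)$ is $\infty$, so that $|\lambda_\beta-z_0|\to\infty$ as $\beta\to\infty$, for any $C>2$ there exists $\beta_C$ with $|\lambda_\beta-z_0|>C|\lambda_\alpha-z_0|$ for all $\beta\geq\beta_C$; for $k$ large, $|\lambda_{N_k+1}-z_0|>C|\lambda_\alpha-z_0|$ as well. Consequently, each of the $N_k-\beta_C+1$ factors with $\beta\geq\beta_C$ is bounded by $2/C$, while the finitely many factors with $\beta<\beta_C$ are uniformly bounded by a constant depending only on $\alpha$ and $C$. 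This yields $|\denk(\lambda_\alpha)|\leq B_{\alpha,C}\,(2/C)^{N_k}$ for some $k$-independent $B_{\alpha,C}$, so $\limsup_{k\to\infty}|\denk(\lambda_\alpha)|^{1/N_k}\leq 2/C$; since $C$ is arbitrary, $|\denk(\lambda_\alpha)|^{1/N_k}\to 0$.

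To close the proof, I would reuse the lower-bound derivation from Theorem~\ref{th:pole_convergence}: normalization of $\denk$ together with \eqref{eq:interp_bound1} and monotonicity of the function $\phi_\alpha$ in \eqref{eq:def_phi_alpha} yield
\begin{equation*}
|\denk(\lambda_\alpha)|\geq\phi_\alpha\!\left(\min_{\beta=1,\ldots,N_k}|\widetilde{\lambda}_\beta^{(k)}-\lambda_\alpha|\right)^{N_k}.
\end{equation*}
Taking $N_k$-th roots on both sides and using the continuity of $\phi_\alpha^{-1}$ at $0$ yields the desired convergence.
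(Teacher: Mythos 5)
Your proposal is correct and follows essentially the same route as the paper: the same upper bound on $\abs{\denk(\lambda_\alpha)}$ obtained from \eqref{eq:quadratic_simplified_poly} and Lemma~\ref{th:fast_optimality}, and the same lower bound via Lemma~\ref{th:interp_bounds} and the function $\phi_\alpha$, reduced to showing that the $N_k$-th root of the upper bound vanishes. The only (minor) difference is the last analytic step, where the paper takes logarithms and invokes the Stolz--Ces\`aro theorem, whereas you redistribute the geometric factor into the product and use an elementary tail estimate based on $\abs{\lambda_\beta-z_0}\to\infty$; both arguments are valid.
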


\begin{proof}
	Let $\alpha\in\{1,2,\ldots\}$ be fixed. Due to \eqref{eq:quadratic_simplified_poly}, \review{we have that}
	\begin{align*}
	\frac{\normV{v^\star_\alpha}}{\abs{\lambda_\alpha-z_0}^{E_k+1}}\abs{\denk(\lambda_\alpha)}\leq&\left(\sum_{\beta=1}^\infty\frac{\normV{v^\star_{\smash\beta}}^2}{\abs{\lambda_\beta-z_0}^{2E_k+2}}\abs{\denk(\lambda_\beta)}^2\right)^{1/2}\\
	=&\widetilde{j}_{E_k}\hspace{-2pt}\left(\denk\right)\text{,}
	\end{align*}
	so that Lemma~\ref{th:fast_optimality} implies
	\begin{equation}\label{eq:den_eval_bound}
	\abs{\denk(\lambda_\alpha)}\leq\frac{\normV{v^\star}}{\normV{v^\star_\alpha}}\prod_{\beta=1}^{N_k}\left(1+\abs{\frac{\lambda_{N_k+1}-z_0}{\lambda_\beta-z_0}}\right)\abs{\frac{\lambda_\alpha-z_0}{\lambda_{N_k+1}-z_0}}^{E_k+1}\text{.}
	\end{equation}
	
	As in the proof of Theorem~\ref{th:pole_convergence}, Lemma~\ref{th:interp_bounds} and the triangular inequality yield
	\begin{align*}
	\abs{\denk(\lambda_\alpha)}\geq&\prod_{\beta=1}^{N_k}\frac{\abs{\widetilde{\lambda}_\beta^{(k)}-\lambda_\alpha}}{1+\abs{\widetilde{\lambda}_\beta^{(k)}-z_0}}\\
	\geq&\prod_{\beta=1}^{N_k}\phi_\alpha\left(\abs{\widetilde{\lambda}_\beta^{(k)}-\lambda_\alpha}\right)\\
	\geq&\phi_\alpha\left(\min_{\beta=1,\ldots,N_k}\abs{\widetilde{\lambda}_\beta^{(k)}-\lambda_\alpha}\right)^{N_k}\text{,}
	\end{align*}
	with $\phi_\alpha$ as in \eqref{eq:def_phi_alpha}. This, together with \eqref{eq:den_eval_bound}, leads to
	\begin{multline}\label{eq:den_N_partial_bound}
	\phi_\alpha\left(\min_{\beta=1,\ldots,N_k}\abs{\widetilde{\lambda}_\beta^{(k)}-\lambda_\alpha}\right)\leq\\\leq\left(\frac{\normV{v^\star}}{\normV{v^\star_\alpha}}\prod_{\beta=1}^{N_k}\left(1+\abs{\frac{\lambda_{N_k+1}-z_0}{\lambda_\beta-z_0}}\right)\abs{\frac{\lambda_\alpha-z_0}{\lambda_{N_k+1}-z_0}}^{E_k+1}\right)^{1/N_k}\text{.}
	\end{multline}
	
	Due to the monotonicity and continuity of $\phi_\alpha$, in order to prove the claim it suffices to show that the right-hand-side of \eqref{eq:den_N_partial_bound} converges to zero as $k$ increases. To this aim, we consider its natural logarithm
	\begin{align*}
	\tau_k^{(\alpha)}&=\frac{1}{N_k}\log\frac{\normV{v^\star}}{\normV{v^\star_\alpha}}+\frac{1}{N_k}\sum_{\beta=1}^{N_k}\log\left(1+\abs{\frac{\lambda_{N_k+1}-z_0}{\lambda_\beta-z_0}}\right)+\frac{E_k+1}{N_k}\log\abs{\frac{\lambda_\alpha-z_0}{\lambda_{N_k+1}-z_0}}\\
	&\leq\frac{1}{N_k}\log\frac{\normV{v^\star}}{\normV{v^\star_\alpha}}+\frac{1}{N_k}\sum_{\beta=1}^{N_k}\log\left(2\abs{\frac{\lambda_{N_k+1}-z_0}{\lambda_\beta-z_0}}\right)+\frac{E_k+1}{N_k}\log\abs{\frac{\lambda_\alpha-z_0}{\lambda_{N_k+1}-z_0}}\\
	&=\frac{1}{N_k}\log\frac{\normV{v^\star}}{\normV{v^\star_\alpha}}+\log 2+\frac{1}{N_k}\sum_{\beta=1}^{N_k}\log\abs{\frac{\lambda_\alpha-z_0}{\lambda_\beta-z_0}}+\frac{E_k+1-N_k}{N_k}\log\abs{\frac{\lambda_\alpha-z_0}{\lambda_{N_k+1}-z_0}}
	\end{align*}
	and prove a bound for each term separately.
	
	\review{Trivially,}
	\begin{equation*}
	\lim_{k\to\infty}\frac{1}{N_k}\log\frac{\normV{v^\star}}{\normV{v^\star_\alpha}}+\log 2=\log 2\text{.}
	\end{equation*}
	Moreover, since $E_k\geq N_k$ for all $k$, the last term satisfies
	\begin{equation*}	
	\frac{E_k+1-N_k}{N_k}\log\abs{\frac{\lambda_\alpha-z_0}{\lambda_{N_k+1}-z_0}}<0
	\end{equation*}
	whenever $\abs{\lambda_\alpha-z_0}<\abs{\lambda_{N_k+1}-z_0}$, i.e. (thanks to the unboundedness of $\{N_k\}_{k=1}^\infty$ and of the spectrum $\Sigma(\mcL)$) for $k$ large enough.
	
	In order to find a bound for the remaining term, we remark that $\smash{\big\{\log\big|\frac{\lambda_\alpha-z_0}{\lambda_\beta-z_0}\big|\big\}_{\beta=1}^\infty}$ is decreasing and unbounded, due, once more, to the unboundedness of the spectrum $\Sigma(\mcL)$. 
	Thus, the Stolz-Ces\`aro theorem \cite{Ash2012} can be applied to a strictly monotone subsequence $(N_{k_l})_{l=1}^\infty$ to prove that
	\begin{equation*}
	\lim_{k\to\infty}\frac{1}{N_k}\sum_{\beta=1}^{N_k}\log\abs{\frac{\lambda_\alpha-z_0}{\lambda_\beta-z_0}}=-\infty\text{.}
	\end{equation*}
	
	In summary, $\lim_{k\to\infty}\tau_k^{(\alpha)}=-\infty$, and the claim follows.
\end{proof}

\begin{remark}
If $\Sigma(\mcL)$ is finite, Lemmas~\ref{th:fast_optimality} and \ref{th:den_at_pole_convergence}, as well as Theorems~\ref{th:pole_convergence} and \ref{th:pole_convergence_N}, and Corollary~\ref{th:den_convergence}, still hold whenever $N<\#\Sigma(\mcL)$, where $\# A$ denotes the cardinality of the set $A$. Also, if $N\geq\#\Sigma(\mcL)$, some of the results become even stronger: within the frameworks of the respective Lemmas and Theorem, \eqref{eq:fast_optimality}-\eqref{eq:den_at_pole_convergence}-\eqref{eq:pole_convergence} become $$\widetilde{j}_E(\den)=0\text{,}$$ and $$\abs{\den(\lambda_\alpha)}=0\quad\text{and}\quad\min_{\beta=1,\ldots,N}\abs{\widetilde{\lambda}_\beta^{(E)}-\lambda_\alpha}=0\quad\text{for }\alpha=1,\ldots,\#\Sigma(\mcL)\text{.}$$
\end{remark}

\begin{remark}
Due to Remark~\ref{re:standard_functional}, all the results in the present Section can be generalized to standard LS-Pad\'e approximants, see Definition~\ref{def:pade_approximant}, whenever the target map $\mcS$ can be expressed using an orthogonal decomposition as in \eqref{eq:meromorphic_decomposition}. However, the main bounds \eqref{eq:fast_optimality}-\eqref{eq:den_at_pole_convergence}-\eqref{eq:pole_convergence} hold only asymptotically in $E$. In particular, numerical tests, see Section~\ref{sec:compare_algorithms}, have shown that, in order to achieve an accuracy which is comparable to that of fast LS-Pad\'e approximants, standard LS-Pad\'e approximants require $N$ more derivatives of the target map $\mcS$.
\end{remark}

%%%%%%%%%%%%%%%%%%%%%%%%%%%%%%%%%%%%%%%%%%%%%%%%%%%%%%%%%%%%%%%%%%%%%%%%%%%%%%%%%%
\section{Convergence of fast LS-Pad\'{e} approximants}
\label{sec:conv_pade_fast}
%%%%%%%%%%%%%%%%%%%%%%%%%%%%%%%%%%%%%%%%%%%%%%%%%%%%%%%%%%%%%%%%%%%%%%%%%%%%%%%%%%

Given the results from the previous section, it remains to check whether fast LS-Pad\'{e} approximants inherit the convergence in $V$ from that (in $\C^{N+1}$) of their denominators, and whether their convergence rate is the same as the one for standard LS-Pad\'{e} approximants \eqref{eq:pade_approx_S}.

In this section we prove that fast approximants converge at exponential rate in $M$, provided the denominator degree stays constant. Also, we show that their convergence rate is better than that in \eqref{eq:pade_approx_S}, and is consistent with the numerically observed rate \eqref{eq:pade_approx_S_empirical}.

Moreover, we show that fast LS-Pad\'e approximants converge to the target map $\mcS$ along more general paths of the Pad\'e table, in particular on para-diagonal sequences $[N+\delta/N]$ for $\delta\geq -1$, under some reasonable assumptions on the choice of $E$.

First, we prove a bound for fast LS-Pad\'{e} residuals in terms of both $M$ and $N$.

\begin{lemma}
    \label{th:fast_convergence_H}
    For any $E,M,N\in\N$, with $M\geq N-1$ and $E=\max\{M,N\}$, consider the (meromorphic) fast LS-Pad\'e residual $\HMN:\C\setminus\Sigma(\mcL)\to V$, defined as
    \begin{equation}
    \label{eq:H_definition}
    \HMN=\den\mcS-\num\text{.}
    \end{equation}
    
	For $z\in\C$, let
	\begin{equation*}
    \distS(z)=\min_{\lambda\in\Sigma(\mcL)}\left|\lambda-z\right|\text{.}
	\end{equation*}
	Then, for all $z\in\C\setminus\Sigma(\mcL)$, \review{we have the bounds:}
    \begin{equation}
    \label{eq:H_bound}
    \normV{\HMN(z)}\leq\frac{C'}{\distS(z)}\abs{\frac{z-z_0}{\lambda_{N+1}-z_0}}^{E+1}\text{if }M\geq N\text{,}
    \end{equation}
	and
    \begin{equation}
    \label{eq:H_bound_bis}
    \normV{\HMN(z)}\leq C'\left(\frac{1}{\distS(z)}+\frac{1}{\left|z-z_0\right|}\right)\abs{\frac{z-z_0}{\lambda_{N+1}-z_0}}^{E+1}\text{if }M=N-1\text{.}
    \end{equation}
    In particular, the common constant $C'$ is given by \eqref{eq:fast_optimality_const}.
\end{lemma}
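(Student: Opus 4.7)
My plan is to derive a closed-form expression for $\HMN$ on $\C\setminus\Sigma(\mcL)$, compute its $V$-norm by orthogonality, and then invoke Lemma~\ref{th:fast_optimality} to absorb the resulting sum into a $\jfuntilde{\den}$-type factor. First, since $\coeff{\num}{\alpha,z_0}=\coeff{\den\mcS}{\alpha,z_0}$ for $\alpha=0,\ldots,M$ by construction of $\num$, the residual has Taylor expansion $\HMN(z)=\sum_{k=M+1}^\infty\coeff{\den\mcS}{k,z_0}(z-z_0)^k$ near $z_0$. I would obtain a clean formula for $\coeff{\den\mcS}{k,z_0}$ whenever $k\geq N$ by inserting \eqref{eq:meromorphic_decomposition} and splitting, for each $\alpha$, $\den(z)/(\lambda_\alpha-z)=\den(\lambda_\alpha)/(\lambda_\alpha-z)-R_\alpha(z)$, with $R_\alpha$ a polynomial of degree $N-1$ (given by the synthetic division of $\den$ by $z-\lambda_\alpha$). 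Since $R_\alpha$ contributes nothing to Taylor coefficients of order $\geq N$, this yields
\begin{equation*}
\coeff{\den\mcS}{k,z_0}=\sum_{\alpha=1}^\infty\frac{v^\star_\alpha\den(\lambda_\alpha)}{(\lambda_\alpha-z_0)^{k+1}}\quad\text{for }k\geq N,
\end{equation*}
and the hypothesis $M\geq N-1$ ensures $M+1\geq N$, so the formula applies at every order appearing in the Taylor tail.

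Next, inserting the formula into the Taylor series, swapping sums (absolutely in a small disk around $z_0$), and summing the resulting geometric series gives a closed form that extends meromorphically to $\C\setminus\Sigma(\mcL)$:
\begin{equation*}
\HMN(z)=(z-z_0)^{M+1}\sum_{\alpha=1}^\infty\frac{v^\star_\alpha\den(\lambda_\alpha)}{(\lambda_\alpha-z_0)^{M+1}(\lambda_\alpha-z)}.
\end{equation*}
The $V$-orthogonality of $\{v^\star_\alpha\}$ then yields
\begin{equation*}
\normV{\HMN(z)}^2=|z-z_0|^{2(M+1)}\sum_{\alpha=1}^\infty\frac{\normV{v^\star_\alpha}^2|\den(\lambda_\alpha)|^2}{|\lambda_\alpha-z_0|^{2(M+1)}|\lambda_\alpha-z|^2}.
\end{equation*}
In the case $M\geq N$ (so $E=M$), the exponent $2(M+1)$ matches the one appearing in $\jfuntilde{\den}^2$ via \eqref{eq:quadratic_simplified_poly}; bounding $|\lambda_\alpha-z|\geq\distS(z)$ uniformly and applying Lemma~\ref{th:fast_optimality} produces \eqref{eq:H_bound} with the stated constant $C'$. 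In the case $M=N-1$ (so $E=N$), the exponent $2(M+1)=2N$ falls short of the $2(N+1)$ demanded by $\jfuntilde{\den}^2$; I would bridge the gap by writing $|\lambda_\alpha-z_0|^{-2N}=|\lambda_\alpha-z_0|^2\,|\lambda_\alpha-z_0|^{-2(N+1)}$ and estimating, via the triangle inequality, $|\lambda_\alpha-z_0|/|\lambda_\alpha-z|\leq 1+|z-z_0|/\distS(z)$. Another application of Lemma~\ref{th:fast_optimality} then gives
\begin{equation*}
\normV{\HMN(z)}\leq C'\,\frac{|z-z_0|^N}{|\lambda_{N+1}-z_0|^{N+1}}\left(1+\frac{|z-z_0|}{\distS(z)}\right),
\end{equation*}
and factoring one extra $|z-z_0|$ out converts the prefactor into $1/\distS(z)+1/|z-z_0|$, exactly as in \eqref{eq:H_bound_bis}.

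The delicate step is the case $M=N-1$: the geometric series only contributes $|z-z_0|^N$ in the numerator, while the target rate $|(z-z_0)/(\lambda_{N+1}-z_0)|^{N+1}$ requires one more power. The triangle-inequality trade described above recovers the missing factor of $|\lambda_\alpha-z_0|$ at the cost of introducing the second summand $1/|z-z_0|$ in the prefactor, which is precisely the structural reason for the asymmetry between \eqref{eq:H_bound} and \eqref{eq:H_bound_bis}. The lower bound $M\geq N-1$ in the statement is nothing but the minimal requirement for the closed-form Taylor coefficient formula to be available at every order appearing in the residual expansion.
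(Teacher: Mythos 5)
Your proof is correct and follows essentially the same route as the paper: both reduce to the identity $\HMN(z)=(z-z_0)^{M+1}\sum_{\alpha}v^\star_\alpha\den(\lambda_\alpha)(\lambda_\alpha-z_0)^{-M-1}(\lambda_\alpha-z)^{-1}$, compute the norm by orthogonality, and close with Lemma~\ref{th:fast_optimality}, including the same exponent-shifting trick for $M=N-1$. The only difference is organizational: you obtain the closed form from the Taylor tail of $\den\mcS$ via the synthetic-division identity $\den(z)/(\lambda_\alpha-z)=\den(\lambda_\alpha)/(\lambda_\alpha-z)-R_\alpha(z)$ (which is arguably a cleaner way to see why $M\geq N-1$ is exactly what is needed), whereas the paper expands $\num$ directly and sums the geometric series inside a double sum.
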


\begin{proof}
	We can exploit \eqref{eq:meromorphic_decomposition_taylor} to derive
	\begin{equation}
    \label{eq:QS_definition}
	\den(z)\mcS(z)=\sum_{\alpha=1}^\infty\frac{v^\star_\alpha}{\lambda_\alpha-z}\den(z)\text{.}
	\end{equation}

	Due to Definition~\ref{def:pade_simplified} and \eqref{eq:meromorphic_decomposition_taylor}, \review{we can express the fast Pad\'e numerator as}
    \begin{align*}
	\num(z)=&\sum_{j=0}^M\coeff{\den\mcS}{j,z_0}\left(z-z_0\right)^j=\sum_{j=0}^M\sum_{l=0}^j\coeff{\den}{l,z_0}\coeff{\mcS}{j-l,z_0}\left(z-z_0\right)^j\\
	=&\sum_{j=0}^M\sum_{l=0}^j\coeff{\den}{l,z_0}\sum_{\alpha=1}^\infty\frac{v^\star_\alpha}{\left(\lambda_\alpha-z_0\right)^{j-l+1}}\left(z-z_0\right)^j\\
	=&\sum_{\alpha=1}^\infty\frac{v^\star_\alpha}{\lambda_\alpha-z_0}\sum_{l=0}^M\coeff{\den}{l,z_0}\left(z-z_0\right)^l\sum_{j=l}^M\left(\frac{z-z_0}{\lambda_\alpha-z_0}\right)^{j-l}\\
	=&\sum_{\alpha=1}^\infty\frac{v^\star_\alpha}{\lambda_\alpha-z_0}\sum_{l=0}^M\coeff{\den}{l,z_0}\left(z-z_0\right)^l\frac{\left(\frac{z-z_0}{\lambda_\alpha-z_0}\right)^{M-l+1}-1}{\frac{z-z_0}{\lambda_\alpha-z_0}-1}\\
	=&\sum_{\alpha=1}^\infty\frac{v^\star_\alpha}{\lambda_\alpha-z}\sum_{l=0}^M\coeff{\den}{l,z_0}\left(z-z_0\right)^l\left(1-\left(\frac{z-z_0}{\lambda_\alpha-z_0}\right)^{M-l+1}\right)\text{.}
	\end{align*}	
	\review{Under our hypotheses, we can replace the upper summation index $M$ in the last sum by $N$. Indeed, this is trivially true for $M\geq N$, since $\coeff{\den}{l,z_0}=0$ for $l>N$. In the case $M=N-1$, direct inspection shows that the addend corresponding to $l=N$ is zero, thus justifying its addition to the sum. Hence, the fast Pad\'e numerator can be expressed as}
	\begin{align*}
	\num(z)=&\sum_{\alpha=1}^\infty\frac{v^\star_\alpha}{\lambda_\alpha-z}\sum_{l=0}^N\coeff{\den}{l,z_0}\left(z-z_0\right)^l\left(1-\left(\frac{z-z_0}{\lambda_\alpha-z_0}\right)^{M-l+1}\right)\\
	=&\sum_{\alpha=1}^\infty\frac{v^\star_\alpha}{\lambda_\alpha-z}\left(\den(z)-\sum_{l=0}^N\coeff{\den}{l,z_0}\left(\lambda_\alpha-z_0\right)^l\left(\frac{z-z_0}{\lambda_\alpha-z_0}\right)^{M+1}\right)\\
	=&\sum_{\alpha=1}^\infty\frac{v^\star_\alpha}{\lambda_\alpha-z}\left(\den(z)-\left(\frac{z-z_0}{\lambda_\alpha-z_0}\right)^{M+1}\den(\lambda_\alpha)\right)\\
	=&\den(z)\mcS(z)-\sum_{\alpha=1}^\infty\frac{v^\star_\alpha}{\lambda_\alpha-z}\den(\lambda_\alpha)\left(\frac{z-z_0}{\lambda_\alpha-z_0}\right)^{M+1}\text{,}
	\end{align*}
	\review{see \eqref{eq:QS_definition}.}

%	These expressions can be plugged into \eqref{eq:H_definition}, yielding
%	\begin{multline}
%	\label{eq:H_residual_preliminary}
%	\HMN(z)=\sum_{\alpha=1}^\infty\frac{v^\star_\alpha}{\lambda_\alpha-z}\Bigg(\den(z)\\+\sum_{l=0}^M\coeff{\den}{l,z_0}\left(\left(\frac{z-z_0}{\lambda_\alpha-z_0}\right)^{M+1}\left(\lambda_\alpha-z_0\right)^l-(z-z_0)^l\right)\Bigg)\text{,}
%	\end{multline}	    
%	which, for $M\geq N$, can be carefully rearranged as
%	\begin{equation*}
%	\HMN(z)=\sum_{\alpha=1}^\infty\frac{v^\star_\alpha}{\lambda_\alpha-z}\den\left(\lambda_\alpha\right)\left(\frac{z-z_0}{\lambda_\alpha-z_0}\right)^{M+1}\text{.}
%	\end{equation*} 
%	The exact same expression can be obtained also for $M=N-1$ by observing that the term $l=N$ can be included in the second sum in \eqref{eq:H_residual_preliminary}, being zero.

%	\review{These expressions can be plugged into \eqref{eq:H_definition}, yielding}
%	\begin{align*}
%	\HMN(z)=&\sum_{\alpha=1}^\infty\frac{v^\star_\alpha}{\lambda_\alpha-z}\Bigg(\sum_{l=0}^N\coeff{\den}{l,z_0}(z-z_0)^l\\
%	&\hspace{.5cm}+\sum_{l=0}^N\coeff{\den}{l,z_0}\left(\left(\frac{z-z_0}{\lambda_\alpha-z_0}\right)^{M+1}\left(\lambda_\alpha-z_0\right)^l-(z-z_0)^l\right)\Bigg)\\
%	=&\sum_{\alpha=1}^\infty\frac{v^\star_\alpha}{\lambda_\alpha-z}\den\left(\lambda_\alpha\right)\left(\frac{z-z_0}{\lambda_\alpha-z_0}\right)^{M+1}\text{.}
%	\end{align*}

	\review{Thus, by exploiting \eqref{eq:H_definition} and the $V$-orthogonality of $\{v^\star_\alpha\}_{\alpha=1}^\infty$, we can express the squared norm of the residual as}
	\begin{equation}
	\label{eq:H_residual_sum}
	\normV{\HMN(z)}^2=\left|z-z_0\right|^{2M+2}\sum_{\alpha=1}^\infty\frac{\normV{v^\star_\alpha}^2}{\left|\lambda_\alpha-z\right|^2\left|\lambda_\alpha-z_0\right|^{2M+2}}\left|\den\left(\lambda_\alpha\right)\right|^2\text{.}
	\end{equation}

	We distinguish two cases:
	\begin{itemize}
	\item\textbf{Case $E=M\geq N$.}
	From \eqref{eq:H_residual_sum}, by exploiting \eqref{eq:quadratic_simplified_poly} we can derive
	\begin{align*}
	\normV{\HMN(z)}^2\leq&\left|z-z_0\right|^{2M+2}\frac{1}{\inf_{\lambda\in\Sigma(\mcL)}\left|\lambda-z\right|^2}\sum_{\alpha=1}^\infty\frac{\normV{v^\star_\alpha}^2}{\left|\lambda_\alpha-z_0\right|^{2M+2}}\left|\den\left(\lambda_\alpha\right)\right|^2\\
	=&\left|z-z_0\right|^{2M+2}\frac{1}{\distS(z)^2}\sum_{\alpha=1}^\infty\frac{\normV{v^\star_\alpha}^2}{\left|\lambda_\alpha-z_0\right|^{2M+2}}\left|\den\left(\lambda_\alpha\right)\right|^2\\
	=&\left|z-z_0\right|^{2M+2}\frac{1}{\distS(z)^2}\widetilde{j}_E\left(\den\right)^2\text{.}
	\end{align*}
	
	Lemma~\ref{th:fast_optimality} can now be applied, leading to
	\begin{equation*}
	\normV{\HMN(z)}^2\leq\frac{\left.C'\right.^2}{\distS(z)^2}\abs{\frac{z-z_0}{\lambda_{N+1}-z_0}}^{2E+2}\text{.}
	\end{equation*} 
	
	\item\textbf{Case $E=N=M+1$.}
	Equation \eqref{eq:H_residual_sum} can be written equivalently as
	\begin{equation*}
	\normV{\HMN(z)}^2=\left|z-z_0\right|^{2M+2}\sum_{\alpha=1}^\infty\frac{\normV{v^\star_\alpha}^2}{\left|\lambda_\alpha-z_0\right|^{2M+4}}\left|\den\left(\lambda_\alpha\right)\right|^2\left|\frac{\lambda_\alpha-z_0}{\lambda_\alpha-z}\right|^2\text{.}
	\end{equation*}

	Now we observe that, \review{for any $\alpha\geq 1$,}
	\begin{equation*}
	\left|\frac{\lambda_\alpha-z_0}{\lambda_\alpha-z}\right|\leq\frac{\left|\lambda_\alpha-z\right|+\left|z-z_0\right|}{\left|\lambda_\alpha-z\right|}=1+\left|\frac{z-z_0}{\lambda_\alpha-z}\right|\leq 1+\frac{\left|z-z_0\right|}{\distS(z)}\text{,}
	\end{equation*} 
	which yields
	\begin{equation*}
	\normV{\HMN(z)}^2\leq\left|z-z_0\right|^{2M+2}\left(1+\frac{\left|z-z_0\right|}{\distS(z)}\right)^2\widetilde{j}_E\left(\den\right)^2\text{.}
	\end{equation*} 
	
	To conclude, it suffices to apply Lemma~\ref{th:fast_optimality}:
	\begin{equation*}
	\normV{\HMN(z)}^2\leq\left.C'\right.^2\left(\frac{1}{\distS(z)}+\frac{1}{\left|z-z_0\right|}\right)^2\abs{\frac{z-z_0}{\lambda_{N+1}-z_0}}^{2E+2}\text{.}
	\end{equation*}
	\end{itemize}
\end{proof}

\begin{remark}
If $\Sigma(\mcL)$ is finite, Lemma~\ref{th:fast_convergence_H} still holds true whenever $N<\#\Sigma(\mcL)$. Moreover, $\normV{\HMN}=0$ over all $\C\setminus\Sigma(\mcL)$ if $M+1\geq N\geq\#\Sigma(\mcL)$.
\end{remark}

Finally, we can use the previous results to prove the convergence in measure of fast LS-Pad\'{e} approximants within the region of the Pad\'e table where $M\geq N-1$ and $E=\max\{M,N\}$.

\begin{theorem}
    \label{th:fast_convergence}
    Let $z_0\in\C\setminus\Sigma(\mcL)$ and $R>0$ be fixed, so that no pole of $\mcS$ lies on $\partial\ball{z_0}{R}$. Also, let $\overline N\in\N$ be the number of poles of $\mcS$ within $\ball{z_0}{R}$. Consider a sequence
	\begin{equation*}    
    \left(M_k,N_k\right)_{k=1}^\infty\subset\{\left(M,N\right)\in\N^2, M\geq N-1\}\text{,}
	\end{equation*}
	such that $M_{k+1}>M_k$ and $N_{k+1}\geq N_k$ for all $k$, with $\lim_{k\to\infty}N_k\geq\overline N$.
    
    Let $\padek$ be the $[M_k/N_k]$ fast LS-Pad\'e approximant of $\mcS$, computed with $E=\max\{M_k,N_k\}$ for $k=1,2,\ldots$. \review{For any $\varepsilon>0$,}
    \begin{equation}
    \label{eq:fast_convergence}
    \lim_{k\to\infty}\abs{\left\{z\in\ball{z_0}{R}\;:\;\normV{\mcS(z)-\padek(z)}>\varepsilon\right\}}=0\text{,}
    \end{equation}
    with $\abs{A}$ denoting the Lebesgue measure of the set $A$.
\end{theorem}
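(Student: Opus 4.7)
The starting point is the identity $\mcS(z) - \padek(z) = \HMNk(z)/\denk(z)$, valid away from $\Sigma(\mcL)$ and the zeros of $\denk$. The strategy is to show that, up to a set of vanishing measure, the numerator decays exponentially fast while the denominator stays bounded away from zero. For the numerator, since $\lim_k N_k \geq \overline N$ and no pole of $\mcS$ lies on $\partial\ball{z_0}{R}$, there exists $\eta > 0$ such that $|\lambda_{N_k+1} - z_0| \geq R + \eta$ for all $k$ large enough. Setting $\theta = R/(R+\eta) < 1$, Lemma~\ref{th:fast_convergence_H} yields, for any $\sigma > 0$ and any $z \in \ball{z_0}{R}$ with $\distS(z) \geq \sigma$, an estimate
$$\normV{\HMNk(z)} \leq \frac{\widetilde C_k}{\sigma}\,\theta^{E_k+1}\text{,}$$
with $\widetilde C_k$ proportional to $C'$ from \eqref{eq:fast_optimality_const}. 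The excluded neighborhood $\{z \in \ball{z_0}{R} : \distS(z) < \sigma\}$ has measure of order $\sigma^2$, independently of $k$.

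For the denominator, Lemma~\ref{th:interp_bounds} gives
$$|\denk(z)| \geq \prod_{\beta=1}^{N_k}\frac{|z - \widetilde\lambda_\beta^{(k)}|}{1 + |\widetilde\lambda_\beta^{(k)} - z_0|}\text{,}$$
where the $\widetilde\lambda_\beta^{(k)}$ are the roots of $\denk$. Theorem~\ref{th:pole_convergence_N} ensures that, for $k$ large, at least one root of $\denk$ sits near each pole of $\mcS$ inside $\ball{z_0}{R}$; the remaining ``spurious'' roots need not be localized, but excising from $\ball{z_0}{R}$ the union of open disks of radius $s_k$ around those roots of $\denk$ that fall inside some fixed enlarged disk $\ball{z_0}{R'}$ (a set of measure at most $\pi N_k s_k^2$) yields the lower bound $|\denk(z)| \geq (s_k/(1+R'))^{N_k}$ on the complement. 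Roots lying outside $\ball{z_0}{R'}$ can be shown to contribute factors close to~$1$ in the product, so they do not spoil the bound.

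Combining the two ingredients, for $z \in \ball{z_0}{R}$ outside both the $\sigma$-neighborhood of $\Sigma(\mcL)$ and the $s_k$-neighborhood of the roots of $\denk$,
$$\normV{\mcS(z) - \padek(z)} \leq \frac{\widetilde C_k}{\sigma}\left(\frac{(1+R')\,\theta^{(E_k+1)/N_k}}{s_k}\right)^{N_k}\text{.}$$
Choosing $s_k = 2(1+R')\,\theta^{(E_k+1)/N_k}\,\widetilde C_k^{1/N_k}$ drives this upper bound to~$0$, while the total measure of the excised set is at most $O(\sigma^2) + O(N_k s_k^2)$. Letting $k \to \infty$ first, and then $\sigma \to 0$, would yield \eqref{eq:fast_convergence}.

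The main obstacle is the quantitative balance implicit in the choice of $s_k$: the product $N_k s_k^2$ must vanish as $k \to \infty$, which, when $N_k \to \infty$, demands that $\widetilde C_k^{1/N_k}$ remain bounded --- to be extracted from the explicit form \eqref{eq:fast_optimality_const} and the dispersion of $\{\lambda_\alpha\}$ --- and that $E_k/N_k$ stay bounded away from zero (which holds here since $E_k = \max\{M_k,N_k\} \geq N_k$ and $M_k \geq N_k - 1$). In the simpler regime where $(N_k)$ is eventually constant, say $N_k \equiv N_\infty \geq \overline N$, Corollary~\ref{th:den_convergence} provides uniform convergence of $|\denk|$ to a polynomial whose roots are the $N_\infty$ closest poles of $\mcS$, which immediately gives uniform convergence of $\padek$ to $\mcS$ on compact subsets of $\ball{z_0}{R}\setminus\Sigma(\mcL)$ and, \emph{a fortiori}, convergence in measure.
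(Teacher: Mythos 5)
Your overall strategy is the same as the paper's: write $\mcS-\padek=\HMNk/\denk$, bound the residual via Lemma~\ref{th:fast_convergence_H} after excising a small neighborhood of $\Sigma(\mcL)\cap\ball{z_0}{R}$, and bound $\abso{\denk}$ from below via Lemma~\ref{th:interp_bounds} after excising a small neighborhood of the roots of $\denk$ (the paper invokes lemniscate theorems for monic polynomials instead of excising disks around the roots, which saves a factor $N_k$ in the measure, but this is a cosmetic difference; note also that your far-root factors are only bounded below by a constant $(R'-R)/(1+R')<1$ per factor, not ``close to $1$'', though this is absorbable). In the regime where $(N_k)$ is eventually constant your argument closes correctly, and your alternative route via Corollary~\ref{th:den_convergence} also works there.

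The genuine gap is the one you flag yourself but do not close: when $N_k\to\infty$, your scheme requires $\smash{\widetilde C_k^{1/N_k}}$ to stay bounded, and this is \emph{false} in general. From \eqref{eq:fast_optimality_const}, $C'\approx 2^{N_k}\abso{\lambda_{N_k+1}-z_0}^{N_k}/\prod_{\alpha=1}^{N_k}\abso{\lambda_\alpha-z_0}$, so $\smash{\widetilde C_k^{1/N_k}}$ behaves like $\abso{\lambda_{N_k+1}-z_0}$ divided by the geometric mean of $\{\abso{\lambda_\alpha-z_0}\}_{\alpha\le N_k}$; for a rapidly growing spectrum (say $\lambda_\alpha\sim 2^{2^\alpha}$, which is allowed since the paper only assumes the spectrum accumulates at $\infty$) this ratio diverges. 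The problem is created at the step where you relax $\abso{\lambda_{N_k+1}-z_0}^{-(E_k+1)}$ to $\theta^{E_k+1}$ with $\theta=R/(R+\eta)$ fixed: you discard exactly the decay needed to cancel the growth of $C'$. The paper keeps the factor $\abso{\lambda_{N_k+1}-z_0}^{-(E_k+1)}$ intact, so that in \eqref{eq:fast_convergence_ineq} the powers of $\abso{\lambda_{N_k+1}-z_0}$ coming from $C'$ cancel against it, leaving $\bigl(R/\abso{\lambda_{N_k+1}-z_0}\bigr)^{M_k+1-N_k}\prod_{\alpha=1}^{N_k}\frac{2\sqrt{\pi}R(1+2R)}{\abso{\lambda_\alpha-z_0}}$; the first factor is $\le 1$ since $M_k\ge N_k-1$, and the $2/N_k$-th power of the product tends to $0$ by Stolz--Ces\`aro because $\log\bigl(c/\abso{\lambda_\alpha-z_0}\bigr)\to-\infty$. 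That cancellation, not a boundedness property of $\smash{\widetilde C_k^{1/N_k}}$, is what makes the unbounded-$N_k$ case work; without it your choice of $s_k$ need not satisfy $N_ks_k^2\to0$. (Separately, your limit order ``$k\to\infty$ first, then $\sigma\to0$'' should be replaced by a choice $\sigma=\sigma_k\to0$ calibrated so that the error stays below the fixed $\varepsilon$, as the paper does with $\delta_k''$; and with your prescribed $s_k$ the final bound $1/(\sigma 2^{N_k})$ does not fall below $\varepsilon$ when $N_k$ is bounded, so $s_k$ must be chosen differently in that case.)
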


\begin{proof}
	Let $k$ be fixed. We indicate with $\smash{\{\widetilde\lambda_\alpha\}_{\alpha=1}^{N_k}}$ the roots of $\denk$, ordered with respect to their distance from $z_0$, and we consider the integer $N'_k\in\{0,\ldots,N_k\}$ such that
	\begin{equation}
	\label{eq:fast_den_roots1}
	|\widetilde\lambda_\alpha-z_0|\leq 2R\quad\text{for }\alpha=1,\ldots,N'_k\text{,}
	\end{equation}
	and
	\begin{equation}
	\label{eq:fast_den_roots2}
	|\widetilde\lambda_\alpha-z_0|>2R\quad\text{for }\alpha=N'_k+1,\ldots,N_k\text{.}
	\end{equation}

	Since $\denk$ belongs to $\PspaceN{N_k}{z_0}$, Lemma~\ref{th:interp_bounds} applies, yielding
	\begin{equation*}
	\abs{\denk(z)}\geq\prod_{\alpha=1}^{N_k}\frac{|\widetilde\lambda_\alpha-z|}{1+|\widetilde\lambda_\alpha-z_0|}\text{.}
	\end{equation*}
	In order to prove a lower bound for $\left|\denk\right|$ over a suitable subset of $\ball{z_0}{R}$, we consider each factor separately. For the terms corresponding to $1\leq\alpha\leq N'_k$, by \eqref{eq:fast_den_roots1} \review{we can write}
	\begin{equation*}
	\frac{|\widetilde\lambda_\alpha-z|}{1+\abso{\widetilde\lambda_\alpha-z_0}}\geq\frac{\abso{\widetilde\lambda_\alpha-z}}{1+2R}\text{.}
	\end{equation*}
	To find a bound for the factors for $N'_k+1\leq\alpha\leq N_k$, we remark that the function $\psi(x)=x/(1+x)$ is increasing for $x>0$. This, together with the triangular inequality and \eqref{eq:fast_den_roots2}, for all $z\in\ball{z_0}{R}$ leads to
	\begin{align*}
	\frac{|\widetilde\lambda_\alpha-z|}{1+\abso{\widetilde\lambda_\alpha-z_0}}\geq&\frac{\abso{\widetilde\lambda_\alpha-z_0}}{1+\abso{\widetilde\lambda_\alpha-z_0}}-\frac{\abso{z-z_0}}{1+\abso{\widetilde\lambda_\alpha-z_0}}\\
	\geq&\frac{2R}{1+2R}-\frac{R}{1+2R}=\frac{R}{1+2R}\text{.}
	\end{align*}
	
	In summary, \review{we have the bound}
	\begin{equation*}
	\abs{\denk(z)}\geq\frac{R^{N_k-N'_k}}{\left(1+2R\right)^{N_k}}\prod_{\alpha=1}^{N'_k}\abso{\widetilde\lambda_\alpha-z}=\frac{R^{N_k-N'_k}}{\left(1+2R\right)^{N_k}}\abs{\ell_{N'_k}(z)}
	\end{equation*}
	for all $z\in\ball{z_0}{R}$, with $\ell_{N'_k}$ being a monic polynomial of degree $N'_k$.
	
	For any fixed $0<\delta_k'\leq\pi R^2$, classical results on lemniscates for monic polynomials (see e.g.~\cite[Theorems 6.6.3--6.6.4]{Baker1996}) prove the existence of a set $\mathcal{E}_k'\subset\C$, with Lebesgue measure $|\mathcal{E}_k'|\leq\delta_k'$, such that 
	\begin{equation*}	
	\abs{\ell_{N'_k}(z)}\geq\left(\frac{\delta_k'}{\pi}\right)^{N'_k/2}\text{ for all }z\in\C\setminus\mathcal{E}_k'\text{.}
	\end{equation*}
	Hence, \review{for all $z\in\ball{z_0}{R}\setminus\mathcal{E}_k'$,}
	\begin{equation}
	\label{eq:fast_den_measure_bound}
	\abs{\denk(z)}\geq\left(\frac{R}{1+2R}\right)^{N_k}\left(\frac{\sqrt{\delta_k'/\pi}}{R}\right)^{N'_k}\geq\left(\frac{\sqrt{\delta_k'/\pi}}{1+2R}\right)^{N_k}\text{.}
	\end{equation}
	
    Now, let $z\in\ball{z_0}{R}\setminus\left(\mathcal{E}_k'\cup\Sigma(\mcL)\right)$ and assume $M_k\geq N_k$; the case $M_k=N_k-1$ can be treated in an analogous way. Lemma~\ref{th:fast_convergence_H}, together with \eqref{eq:fast_den_measure_bound}, yields
    \begin{align*}
    \normV{\mcS(z)-\padek(z)}=&\frac{1}{\left|\denk(z)\right|}\normV{\HMNk(z)}\\
    \leq&\frac{C'}{\distS(z)\left|\denk(z)\right|}\abs{\frac{z-z_0}{\lambda_{N_k+1}-z_0}}^{M_k+1}\\
    \leq&\frac{C'}{\distS(z)}\left(\frac{1+2R}{\sqrt{\delta_k'/\pi}}\right)^{N_k}\abs{\frac{z-z_0}{\lambda_{N_k+1}-z_0}}^{M_k+1}\text{,}
    \end{align*}
    with $C'$ as in Lemma~\ref{th:fast_convergence_H}.
    
    The term $1/\distS(z)$ diverges as $z$ gets close to $\Sigma(\mcL)$. As such, we proceed by excluding small neighborhoods of the poles of $\mcS$ within the region of convergence. To this aim, let $0<\delta_k''<\overline{N}\pi\smash{\big(\abs{\lambda_{\overline N+1}-z_0}-R\big)^2}$ be given. The set
	\begin{equation*}
    \mathcal{E}_k''=\bigcup_{\alpha=1,\ldots,\overline N}\mathcal{B}\left(\lambda_\alpha, \sqrt{\frac{\delta_k''}{\overline{N}\pi}}\right)
	\end{equation*}
    has Lebesgue measure $\abs{\mathcal{E}_k''}\leq\delta_k''$ and satisfies
	\begin{equation*}
	\distS(z)\geq\sqrt{\frac{\delta_k''}{\overline{N}\pi}}\quad\text{for all }z\in\ball{z_0}{R}\setminus\mathcal{E}_k''\text{.}
    \end{equation*}
    In particular, we remark that, thanks to the ordering of the elements of $\Sigma(\mcL)$, the condition $\delta_k''<\overline{N}\pi\smash{\big(\abs{\lambda_{\overline N+1}-z_0}-R\big)^2}$ allows to ignore all the poles with distance from $z_0$ larger than $R$ in the estimation of $\distS$ over $\ball{z_0}{R}$.
    
    If we define $\mathcal{E}_k=\mathcal{E}_k'\cup\mathcal{E}_k''$, whose measure is not greater than $\delta_k'+\delta_k''$ by construction, for all $z\in\ball{z_0}{R}\setminus\mathcal{E}_k$, \review{we have that}
    \begin{multline}
    \label{eq:fast_convergence_ineq_0}
    \normV{\mcS(z)-\padek(z)}\leq\frac{\sqrt{\hspace{.01cm}\overline{N}\pi}\normV{v^\star}}{\sqrt{\left.\delta_k'\right.^{N_k}\delta_k''}}\abs{\frac{z-z_0}{\lambda_{N_k+1}-z_0}}^{M_k+1}\times\\
    \times\left(\frac{|\lambda_{N_k+1}-z_0|}{R}\right)^{N_k}\prod_{\alpha=1}^{N_k}\left(\left(\sqrt{\pi}R(1+2R)\right)\left(\frac{1}{\abs{\lambda_{N_k+1}-z_0}}+\frac{1}{\abs{\lambda_\alpha-z_0}}\right)\right)\text{,}
    \end{multline}
    which, by exploiting the ordering of the poles $\{\lambda_\alpha\}_{\alpha=1}^\infty$, implies
    \begin{multline}
    \label{eq:fast_convergence_ineq}
    \normV{\mcS(z)-\padek(z)}\leq\frac{\sqrt{\hspace{.01cm}\overline{N}\pi}\normV{v^\star}}{\sqrt{\left.\delta_k'\right.^{N_k}\delta_k''}}\left(\frac{R}{|\lambda_{N_k+1}-z_0|}\right)^{M_k+1-N_k}\times\\
    \times\prod_{\alpha=1}^{N_k}\frac{2\sqrt{\pi}R(1+2R)}{\abs{\lambda_\alpha-z_0}}\text{.}
    \end{multline}
    
	To conclude the proof we consider two cases:    
	\begin{itemize}
	\item\textbf{Case $(N_k)_{k=1}^\infty$ bounded.}
	There exists $K>0$ such that $N_k=\lim_{l\to\infty}N_l=:N^\star$ for $k\geq K$. For $k\geq K$, \eqref{eq:fast_convergence_ineq} can be expressed as
    \begin{equation}
    \label{eq:fast_convergence_ineq1}
    \normV{\mcS(z)-\mcS_{[M_k/N^\star]}(z)}\leq\frac{C}{\sqrt{\left.\delta_k'\right.^{N^\star}\delta_k''}}\left(\frac{R}{|\lambda_{N^\star+1}-z_0|}\right)^{M_k+1}
    \end{equation}
	for all $z\in\ball{z_0}{R}\setminus\mathcal{E}_k$, with $C$ independent of $k$. Since $R<\abs{\lambda_{N^\star+1}-z_0}$ and $\lim_{k\to\infty}M_k=\infty$, \review{we can easily see that}
	\begin{equation*}
	\lim_{k\to\infty}\left(\frac{R}{|\lambda_{N^\star+1}-z_0|}\right)^{M_k+1}=0\quad\text{for all }z\in\ball{z_0}{R}\text{.}
	\end{equation*}
	
	For all $k\geq K$, let
	\begin{equation*}
	\delta_k'=\min\left\{\pi R^2,\left(\frac{C}{\varepsilon}\left(\frac{R}{|\lambda_{N^\star+1}-z_0|}\right)^{M_k+1}\right)^{1/N^\star}\right\}
	\end{equation*}
	and
	\begin{equation*}
	\delta_k''=\min\left\{\overline{N}\pi\big(\abs{\lambda_{\overline N+1}-z_0}-R\big)^2,\frac{C}{\varepsilon}\left(\frac{R}{|\lambda_{N^\star+1}-z_0|}\right)^{M_k+1}\right\}\text{.}
	\end{equation*}
	With these definitions, \eqref{eq:fast_convergence_ineq1} implies that
	\begin{equation*}	
	\normV{\mcS(z)-\mcS_{[M_k/N^\star]}(z)}\leq\varepsilon\quad\text{for all }z\in\ball{z_0}{R}\setminus\mathcal{E}_k\text{,}
	\end{equation*}	
	with $\abs{\mathcal{E}_k}\leq\delta_k'+\delta_k''$. As both $\delta_k'$ and $\delta_k''$ converge to 0 as $k$ increases, the claim follows.

	\item\textbf{Case $(N_k)_{k=1}^\infty$ unbounded.}
	As in the previous case, we leverage \eqref{eq:fast_convergence_ineq} to obtain suitable definitions for $\delta_k'$ and $\delta_k''$: for all $k=1,2,\ldots$, we set
	\begin{equation*}
	\delta_k'=\min\left\{\pi R^2,\left(\prod_{\alpha=1}^{N_k}\frac{2\sqrt{\pi}R(1+2R)}{|\lambda_\alpha-z_0|}\right)^{2/N_k}\right\}
	\end{equation*}
	and	
	\begin{equation*}
	\delta_k''=\min\left\{\overline{N}\pi\big(\abs{\lambda_{\overline N+1}-z_0}-R\big)^2,\frac{C^2}{\varepsilon^2}\left(\frac{R}{\abs{\lambda_{N_k+1}-z_0}}\right)^{2(M_k+1-N_k)}\right\}\text{.}
	\end{equation*}
	As before, $\normV{\mcS(z)-\padek(z)}\leq\varepsilon$ for all $z\in\ball{z_0}{R}\setminus\mathcal{E}_k$, with $\abs{\mathcal{E}_k}\leq\delta_k'+\delta_k''$. To prove the claim, it now suffices to show that $\delta_k'$ and $\delta_k''$ converge to 0 as $k$ increases.
	
	\review{Let us consider $\delta_k'$ first}: for all $k$ \review{we have that}
	\begin{equation*}
	\delta_k'\leq\text{exp}\left\{\frac{2}{N_k}\sum_{\alpha=1}^{N_k}\log\frac{2\sqrt{\pi}R(1+2R)}{|\lambda_\alpha-z_0|}\right\}\text{.}
	\end{equation*}
	Since the spectrum $\Sigma(\mcL)$ has a single limit point at infinity, \review{we obtain}
	\begin{equation*}	
	\lim_{\alpha\to\infty}\log\frac{2\sqrt{\pi}R(1+2R)}{|\lambda_\alpha-z_0|}=-\infty\text{.}
	\end{equation*}	
	Now, since $(N_k)_{k=1}^\infty$ is non-decreasing and unbounded, the Stolz-Ces\`aro theorem \cite{Ash2012} can be applied to a strictly monotone subsequence $(N_{k_l})_{l=1}^\infty$ to prove that
	\begin{equation*}	
	\lim_{k\to\infty}\frac{2}{N_k}\sum_{\alpha=1}^{N_k}\log\frac{2\sqrt{\pi}R(1+2R)}{|\lambda_\alpha-z_0|}=-\infty\text{,}
	\end{equation*}	
	or, equivalently, that $\lim_{k\to\infty}\delta_k'=0$.
	
	The second parameter $\delta_k''$ is easier to deal with: since $M_k\geq N_k$ for all $k$, the convergence of $\delta_k''$ to 0 can be verified by exploiting once more the unboundedness of the spectrum $\Sigma(\mcL)$.
	\end{itemize}
\end{proof}

\begin{corollary}
    \label{th:fast_convergence_speed}
	Assume that the hypotheses of Theorem~\ref{th:fast_convergence} hold with $N_k=N^\star\geq\overline N$ for all $k$.
	For any $\delta>0$ there exist $C''$ independent of $k$ and of $z$, and $\mathcal{E}_k\subset\C$, with $\left|\mathcal{E}_k\right|\leq\delta$, such that, for all $z\in\ball{z_0}{R}\setminus\mathcal{E}_k$, \review{the approximation error admits the following bound:}
    \begin{equation}
    \label{eq:fast_convergence_speed}
    \normV{\mcS(z)-\padegen{\mcS}{M_k}{N^\star}(z)}\leq C''\abs{\frac{z-z_0}{\lambda_{N^\star+1}-z_0}}^{M_k}\text{.}
    \end{equation}
\end{corollary}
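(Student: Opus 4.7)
The plan is to extract the desired rate from the intermediate estimate \eqref{eq:fast_convergence_ineq_0} established in the proof of Theorem~\ref{th:fast_convergence}, rather than re-deriving it from scratch. With $N_k=N^\star$ held constant, every factor in \eqref{eq:fast_convergence_ineq_0} arising from the product over $\alpha=1,\ldots,N_k$ and from the ratio $|\lambda_{N_k+1}-z_0|/R$ collapses to a single constant depending only on $N^\star$, $R$, $z_0$, $\Sigma(\mcL)$, and $\normV{v^\star}$, and in particular independent of $k$ and $z$.

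Given $\delta>0$, rather than letting the auxiliary parameters $\delta_k'$ and $\delta_k''$ shrink with $k$ as in the theorem, I would fix them to be $k$-independent positive constants whose sum is at most $\delta$, for instance $\delta_k'=\min\{\pi R^2,\delta/2\}$ and $\delta_k''=\min\{\overline{N}\pi(|\lambda_{\overline{N}+1}-z_0|-R)^2,\delta/2\}$. The exceptional sets $\mathcal{E}_k$ constructed in the proof of Theorem~\ref{th:fast_convergence} then satisfy $|\mathcal{E}_k|\leq\delta$ for every $k$, and on $\ball{z_0}{R}\setminus\mathcal{E}_k$ the inequality \eqref{eq:fast_convergence_ineq_0} specialised to $N_k=N^\star$ reduces to
$$\normV{\mcS(z)-\padegen{\mcS}{M_k}{N^\star}(z)}\leq C'''\,\abs{\frac{z-z_0}{\lambda_{N^\star+1}-z_0}}^{M_k+1},$$
where $C'''$ depends on $\delta$, $N^\star$, $R$, $z_0$, $\Sigma(\mcL)$, and $\normV{v^\star}$, but not on $k$ or $z$.

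Finally, to convert the exponent $M_k+1$ to $M_k$ as stated in the corollary, I would use that, since $N^\star\geq\overline{N}$ and no pole of $\mcS$ lies on $\partial\ball{z_0}{R}$, the closed ball $\overline{\ball{z_0}{R}}$ contains no pole with index greater than $\overline{N}$, so $|\lambda_{N^\star+1}-z_0|>R\geq|z-z_0|$. The ratio $\bigl|\tfrac{z-z_0}{\lambda_{N^\star+1}-z_0}\bigr|$ is therefore bounded above by $1$ on $\ball{z_0}{R}$, and raising it to the $(M_k+1)$-th instead of the $M_k$-th power only sharpens the bound; absorbing the spurious factor into the constant yields \eqref{eq:fast_convergence_speed} with $C'':=C'''$. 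No step here is substantive in its own right: the proof is essentially bookkeeping on which quantities produced in the proof of Theorem~\ref{th:fast_convergence} depend on $k$ and which do not, and the only mild subtlety is the choice of fixed $\delta',\delta''$ that simultaneously controls the measure of $\mathcal{E}_k$ and keeps the prefactor finite, which is why the statement is presented as a corollary.
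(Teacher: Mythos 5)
Your proposal is correct and follows exactly the route the paper takes: its entire proof of this corollary is the single sentence ``The claim follows from \eqref{eq:fast_convergence_ineq_0}'', and your argument is a faithful elaboration of that, correctly fixing $\delta_k'$ and $\delta_k''$ to $k$-independent values summing to at most $\delta$ and absorbing the remaining $k$-independent factors (including the harmless exponent shift from $M_k+1$ to $M_k$, valid since $\abs{z-z_0}<R<\abs{\lambda_{N^\star+1}-z_0}$) into the constant $C''$.
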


\begin{proof}
	The claim follows from \eqref{eq:fast_convergence_ineq_0}.
\end{proof}

\begin{remark}
Theorem~\ref{th:fast_convergence} and Corollary~\ref{th:fast_convergence_speed} still hold if $\Sigma(\mcL)$ is finite. In particular, if $\lim_{k\to\infty}N_k\geq\#\Sigma(\mcL)$, both results are satisfied by setting 
$\mathcal{E}_k=\Sigma(\mcL)\cap\ball{z_0}{R}$,
and the right hand side of \eqref{eq:fast_convergence_speed} is identically 0 for large $k$.
\end{remark}

\begin{remark}
The sequence of sets $\{\mathcal{E}_k\}_{k=1}^\infty$ in the proof of Theorem~\ref{th:fast_convergence} and in Corollary~\ref{th:fast_convergence_speed} is used to account for the instabilities of the solution map $\mcS$ and of the rational approximant $\pade$ near the respective poles. In particular, the proof of Theorem~\ref{th:fast_convergence} shows that each $\mathcal{E}_k$ can be defined as the union of suitable neighborhoods of poles of $\mcS$ and of $\pade$.
\end{remark}

\begin{remark}
\review{With a small effort (the necessary theoretical tools can be found, e.g., in \cite[Section 6.6]{Baker1996}), Theorem~\ref{th:fast_convergence} can be extended to show that \eqref{eq:fast_convergence} still holds true if logarithmic capacity \cite{Baker1996, Hille1965} replaces Lebesgue measure. Similarly, the sets in the family $\{\mathcal{E}_k\}_{k=1}^\infty$ in Corollary~\ref{th:fast_convergence_speed} can be shown to have arbitrarily small logarithmic capacity. In this way, optimal convergence results in classical Pad\'e approximation \cite{Baker1996} find their counterparts for fast LS-Pad\'e approximants.}
\end{remark}

%%%%%%%%%%%%%%%%%%%%%%%%%%%%%%%%%%%%%%%%%%%%%%%%%%%%%%%%%%%%%%%%%%%%%%%%%%%%%%%%%%
\section{Numerical implementation of fast LS-Pad\'{e} approximants}
\label{sec:algo_details}
%%%%%%%%%%%%%%%%%%%%%%%%%%%%%%%%%%%%%%%%%%%%%%%%%%%%%%%%%%%%%%%%%%%%%%%%%%%%%%%%%%

In this section, we give some details on the practical implementation of Algorithm~\ref{algo:fast}. Consider a compact set $K\subset\C$ where we wish to approximate the meromorphic map $\mcS$. To guarantee the convergence of LS-Pad\'e approximants in $K\setminus\Sigma(\mcL)$, we must choose $z_0\in\C\setminus\Sigma(\mcL)$ and estimate the number $\overline{N}\in\N$ of poles contained in the smallest disk which includes $K$. Still, in most applications, $\Sigma(\mcL)$ is not known explicitly. Hence, a preliminary approximate localization of $\Sigma(\mcL)$ (or, at least, of the elements of $\Sigma(\mcL)$ closest to $K$) is necessary.

A description or analysis of such a procedure falls outside the scope of this paper. However, we envision two possible strategies:
\begin{itemize}
    \item the number of elements of $\Sigma(\mcL)$ within a certain real interval can be approximated through \emph{a priori} eigenvalue estimators, e.g. by applying Weyl's law, see~\cite{Bhatia2007};
    \item an estimate of the positions of the poles of $\mcS$ closest to $z_0$ may be obtained adaptively through the application of fast LS-Pad\'e approximants, where the value of $N$ is updated according to some \emph{a posteriori} estimator computed from Pad\'e denominators.
\end{itemize}

From now on, we assume that $z_0$ and the denominator degree $N$ have been fixed. For instance, we may have set $z_0$ equal to the Chebyshev center of $K$, i.e. the center of the smallest ball which contains $K$. Moreover, we assume that $N$ is not smaller than $\overline{N}$, so that $K\subset\ball{z_0}{\left|\lambda_{N+1}-z_0\right|}$, where, as usual, we order the elements of $\Sigma(\mcL)$ with respect to their distance to $z_0$.

Finally, it is necessary to choose $M$ and $E$, with $M\geq N-1$ and $E=\max\{M,N\}$; this last condition is to ensure that Theorem~\ref{th:fast_convergence} and Corollary~\ref{th:fast_convergence_speed} can be applied. The value of $E$ represents the number of derivatives of $\mcS$ that need to be computed, and affects the accuracy of the approximation of the poles of $\mcS$, see Section~\ref{sec:conv_denominator_fast}. However, while a larger $E$ is expected to yield a better approximation of the exact denominator $g$, in practice it may be desirable to choose a smaller value, since the condition number of $\Gtilde$ increases exponentially with $E$, leading to numerical instability (see also~\cite{Guillaume1998} for similar observations in the case of least-squares multivariate scalar Pad\'e approximants).

Once the Taylor coefficients of $\mcS$ at $z_0$, i.e. $\{\coeff{\mcS}{\gamma,z_0}\}_{\gamma=0}^E$, are computed by exploiting \eqref{eq:problem_parametric} and \eqref{eq:helmholtz_taylor1_parametric_weak}, the functional $\smash{\widetilde{j}_{E}}$ needs to be minimized. To this aim, instead of building explicitely the matrix $\Gtilde$, its Gram structure is exploited to obtain a better conditioned problem. In particular, the quasi-matrix
\begin{equation*}
{\bf\mathcal{A}}=\left[\coeff{\mcS}{E-N,z_0}\Big|\cdots\Big|\coeff{\mcS}{E,z_0}\right]\text{,}
\end{equation*}
whose range is a subspace of $V$, is assembled, and its QR decomposition is computed \cite{Trefethen2009}, so that 
\begin{equation}
\label{eq:quasimatrix_qr}
{\bf\mathcal{A}}=\left[\mcQ_{E-N}\Big|\cdots\Big|\mcQ_{E}\right]R\text{,}
\end{equation}
with $\{\mcQ_j\}_{j=E-N}^E\subset V$ forming a $V$-orthonormal set, and $R\in\C^{(N+1)\times(N+1)}$ being upper triangular. This allows us to find the denominator $\den$ from a right-singular vector of $R$ corresponding to the minimal singular value, effectively with a condition number which is the square root of the one for the original problem.

In many applications (for instance -- and in particular -- in the field of model order reduction for parametric PDEs), both $V$ and $\mcL$ are actually finite-dimensional approximations of some reference infinite-dimensional space $V^0$ and operator $\mcL^0$ respectively, see Section~\ref{sec:compare_algorithms} for an example. This does not affect the results discussed in the previous sections, but introduces an additional source of error, namely the approximation of the PDE, which is not considered in this work.

In this particular but quite common framework, the evaluation of the target map through the solution of \eqref{eq:problem_parametric} and the recursion \eqref{eq:helmholtz_taylor1_parametric_weak} correspond to the solution of linear systems, whose matrices depend parametrically on $z$. Thus, the derivatives of $\mcS$ can be interpreted as a basis of the Krylov subspace of $V$ generated by $\smash{\big((\mcL-z_0\mcI)^{-1},v^\star\big)}$. As such, an approach based on the Arnoldi algorithm could be applied to obtain quite naturally the orthogonal decomposition \eqref{eq:quasimatrix_qr}.

%%%%%%%%%%%%%%%%%%%%%%%%%%%%%%%%%%%%%%%%%%%%%%%%%%%%%%%%%%%%%%%%%%%%%%%%%%%%%%%%%%
\section{Numerical comparison of standard and fast LS-Pad\'{e} approximants}
\label{sec:compare_algorithms}
%%%%%%%%%%%%%%%%%%%%%%%%%%%%%%%%%%%%%%%%%%%%%%%%%%%%%%%%%%%%%%%%%%%%%%%%%%%%%%%%%%

We devote this section to the comparison of standard and fast LS-Pad\'{e} approximants for the map $\mcS$ which associates to any value of $z$ the $\mathbb{P}^3$ finite element discretization of the self-adjoint Helmholtz problem \eqref{eq:helmholtz_parametric_weak_complex}, with $\Omega=(0,\pi)^2$ and $f\in L^2(\Omega)$. We refer to \cite{Bonizzoni2018} for further numerical examples of (standard) LS-Pad\'e approximation in similar and more general (non-self-adjoint) settings.

In particular, given $\nu\in\R^+$ and $\theta\in[0,2\pi)$, we define $\bm{d}=(\cos(\theta),\sin(\theta))^\top$ and
\begin{equation*}
u_{ex}(\x)=w(\x)e^{-i\nu\bm{d}^\top\x}\in H^1_0(\Omega)\text{,}
\end{equation*}
with $w(\x)=\frac{16}{\pi^4}x_1x_2(\pi-x_1)(\pi-x_2)$ being a bubble vanishing on $\partial\Omega$. Moreover, we set $f=-\Delta u_{ex}-\nu^2u_{ex}$, so that $u_{ex}=\mcS(\nu^2)$. For our numerical experiments, we choose $\nu^2=12$ and $\theta=\frac{\pi}{3}$.

As described in Section~\ref{sec:helmholtz_solmap}, the spectral decomposition \eqref{eq:meromorphic_decomposition} holds true, with $v^\star=f$. In particular, as our experiments will be carried out in a finite element framework, it is crucial to remark that a finite (and finite-dimensional) counterpart of \eqref{eq:meromorphic_decomposition} is true in the discrete setting as well. Moreover, the discrete spectrum of the Laplacian is a good approximation of the infinite-dimensional one, at least for low/mid-frequencies (here the adjectives ``low/mid'' have to be understood in a relative sense with respect to the specific meshsize and finite element degree which are employed \cite{Babuska1997}).

Hence, as the solution map $\mcS$ is meromorphic (both in the continuous and discrete settings), we wish to approximate it for $z$ within the interval of interest $K=[9,15]$ using LS-Pad\'e approximants, according to Definitions~\ref{def:pade_approximant} and \ref{def:pade_simplified}. As discussed in Section~\ref{sec:helmholtz_solmap}, the problem of computing LS-Pad\'e approximants for $\mcS$ can be cast within $\left(V, \dual{\cdot}{\cdot}{V}\right)$, where $V=H^1_0(\Omega)$ and
\begin{equation*}
\dual{u}{v}{V}=\dual{\nabla u}{\nabla v}{L^2(\Omega)}+\nu^2\dual{u}{v}{L^2(\Omega)}\text{.}
\end{equation*}
We denote by $\normV{\cdot}$ the norm induced by $\dual{\cdot}{\cdot}{V}$.

The interval of interest $K$ contains two simple poles of the solution map $\lambda_1=13$ and $\lambda_2=10$, while the closest pole outside $K$ is $\lambda_3=8$. As parameters for the LS-Pad\'{e} approximant, we choose $z_0=12+\frac{i}{2}$, $\rho=R_K=\max_{z\in K}|z-z_0|$ and $N=2$, while we vary $M\in\{2,\ldots,8\}$. For the standard approximant, we set $E=M+N$, whereas we choose $E=M$ for the fast one.

To assess the accuracy of the approximation, we sample uniformly the interval $K$, and compare the numerical solution of the Helmholtz equation with the LS-Pad\'{e} approximations, measuring the error in the weighted norm $\normV{\cdot}$.

Some numerical results are shown in Figure~\ref{fig:error_std_fast}. We observe that standard and fast LS-Pad\'{e} approximants achieve a similar accuracy for a fixed numerator degree, even though the fast approximant requires the computation of $N$ fewer derivatives of the solution map. Moreover, if we compare the error that the two approximants deliver with the same amount of information (i.e. with the same $E$), we can verify that the fast LS-Pad\'{e} approximant leads to uniformly better results, which, in turn, are comparable to those obtained with a standard approximant relying on $N$ more derivatives of the solution map.
\begin{figure}[t!]
	\includegraphics[scale=1]{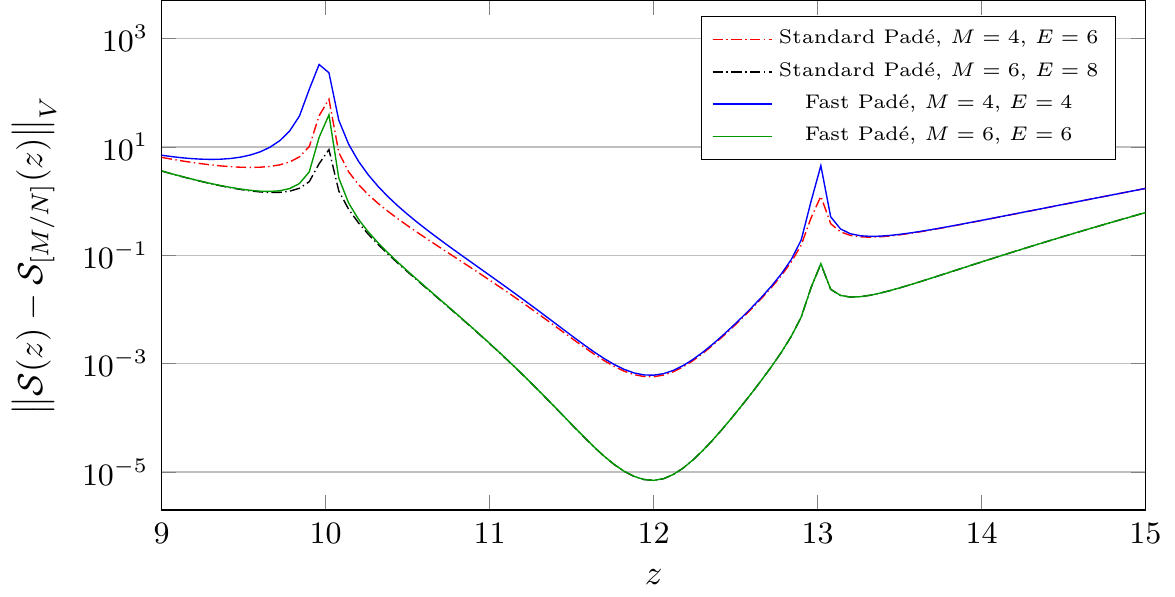}
    \caption{Error (in the weighted $H^1(D)$-norm) achieved by standard and fast Pad\'{e} approximants in the approximation of the solution map of (\ref{eq:helmholtz_parametric_weak_complex}). The high-fidelity solution (obtained with $\mathbb{P}^3$ finite elements) is computed for $n=101$ uniformly sampled values of $z\in[9,15]$.}
    \label{fig:error_std_fast}
\end{figure}
\begin{figure}[t!]
	\includegraphics[scale=1]{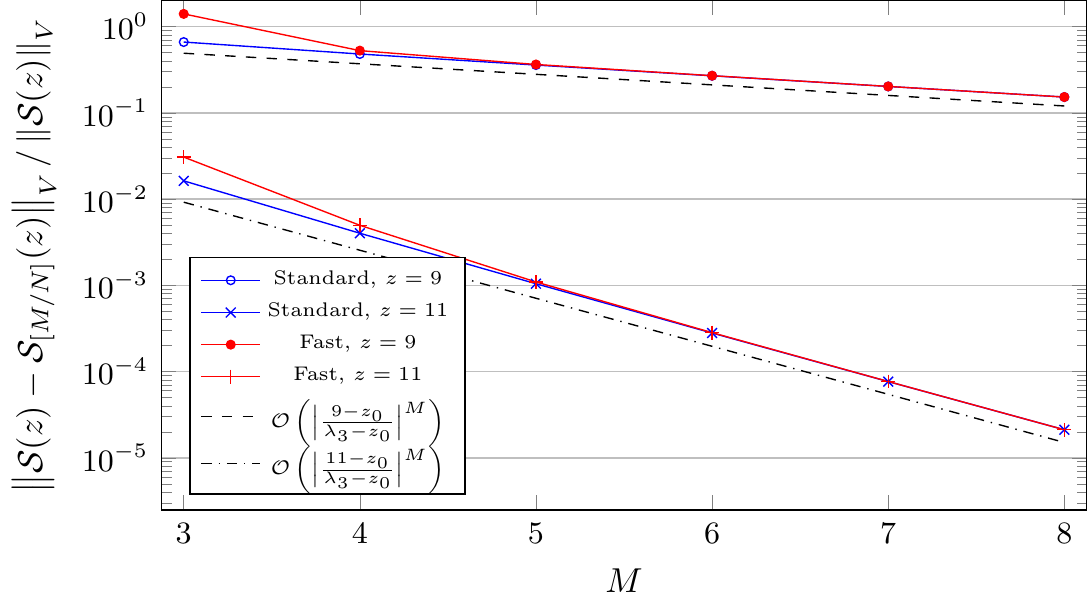}
    \caption{Convergence plots for the relative error (in the weighted $H^1(D)$-norm) achieved by standard and fast Pad\'{e} approximants at $z\in\{9,11\}$ with respect to the numerator degree. In black the convergence estimate \eqref{eq:fast_convergence_speed} for fast approximants.}
    \label{fig:error_std_fast_convergence}
\end{figure}

The error convergence in $z=9$ and $z=11$ with respect to $M$ is shown in Figure~\ref{fig:error_std_fast_convergence}. The two types of LS-Pad\'{e} approximants yield similar errors, and we can verify that the convergence rate \eqref{eq:fast_convergence_speed} holds true for both. Several numerical tests with different values of $\rho\in\{0.1R_K,R_K,10R_K\}$ have shown no evident dependence of the standard LS-Pad\'e approximation error (or of its convergence rate) on $\rho$, as \eqref{eq:pade_approx_S} could have lead to believe.

Finally, we wish to check how accurate the two LS-Pad\'{e} approximants are in the approximation of the poles of the solution map. To this aim, we compare the roots of the denominator $\den\in\PspaceN{2}{z_0}$ of each approximant with the exact poles $\lambda_1$ and $\lambda_2$. The results with respect to $E$ are shown in Figure~\ref{fig:error_std_fast_angle}. For each pole, the two types of LS-Pad\'{e} approximants seem to yield the same exponential decay. In particular, the closest pole $\lambda_1$ is approximated better than $\lambda_2$, and its error decays at a faster rate, as expected from Theorem~\ref{th:pole_convergence}, whose theoretical convergence rate \eqref{eq:pole_convergence} can be observed. Comparing the two approximation kinds, it can be observed that, for fixed $E$, the error obtained with fast LS-Pad\'{e} approximants is always smaller than the one achieved with standard approximants.
\begin{figure}[t!]
	\includegraphics[scale=1]{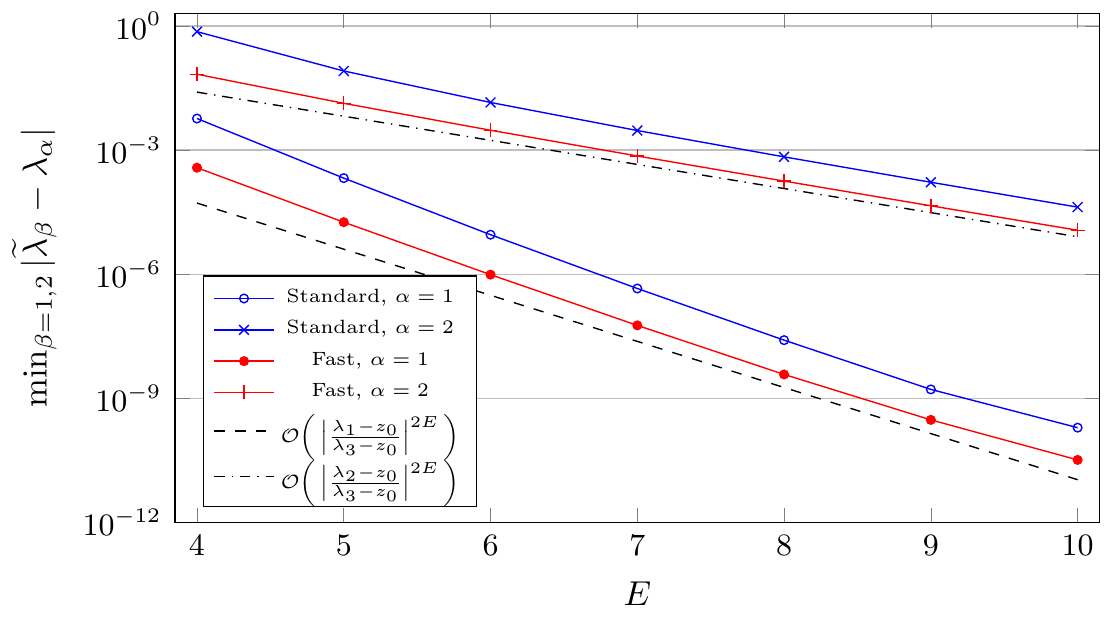}
    \caption{Convergence plot for the error in the approximation of the two closest poles of the solution map with respect to the number of computed derivatives. The results for standard LS-Pad\'{e} approximants are in blue, whereas those for fast approximants are in red. In black the \emph{a priori} convergence estimate (\ref{eq:pole_convergence}).}
    \label{fig:error_std_fast_angle}
\end{figure}

%%%%%%%%%%%%%%%%%%%%%%%%%%%%%%%%%%%%%%%%%%%%%%%%%%%%%%%%%%%%%%%%%%%%%%%%%%%%%%%%%%
\section{Conclusions}
\label{sec:conclusions}
%%%%%%%%%%%%%%%%%%%%%%%%%%%%%%%%%%%%%%%%%%%%%%%%%%%%%%%%%%%%%%%%%%%%%%%%%%%%%%%%%%

In this paper, we have considered Hilbert space-valued meromorphic functions arising from solution maps of parametric PDEs with the eigenproblem-like structure \eqref{eq:problem_parametric}, where $\mcL$ is an operator with normal and compact resolvent. We have proposed a rational model order reduction approach, based on single-point Least-Squares Pad\'e approximants, relying on the computation of the coefficients of the truncated Taylor series of the target function $\mcS$ at a single fixed point. The proposed approach improves, in terms of computational cost and convergence properties, the one introduced in \cite{Bonizzoni2016}, which, on the other hand, is not restricted to the case of normal operators.

Assuming the degree $N$ of the denominator of the approximant to be constant, an exponential convergence rate with respect to the number of derivatives has been proven for the error in the approximation of the target map, for values of the parameter within a disk centered at $z_0$ and encompassing $N$ poles of $\mcS$, with the exception of a set of arbitrarily small measure. A more general convergence result in measure, namely Theorem~\ref{th:fast_convergence}, has also been derived under milder conditions on the approximant type $\MNl$, including, in particular, paradiagonal approximations of type $[M/M]$ and $[M/M+1]$ with $M\to\infty$.

Moreover, it has been proven that the poles of the target function are approximated with arbitrary precision by the roots of the Pad\'e denominator, as the number of derivatives increases. In particular, an exponential convergence rate of the poles is achieved if the degree of the denominator is constant.

We believe that the description of the implementation aspects of the technique we propose has justified the word ``fast'' in the title of this work, since Krylov-based methods (in particular an Arnoldi-type algorithm, for stability purposes) can be applied to achieve a very efficient model order reduction approach.

Modifications of fast LS-Pad\'e approximants allowing snapshots of the Taylor coefficients of $\mcS$ to be taken at several points in the parameter domain are under investigation, in the spirit of rational interpolants, on the approximation theory side, and of Reduced Basis approaches, on the model order reduction side.

\end{document}